\documentclass[12pt, a4paper, reqno]{amsart}

\usepackage{hyperref}
\usepackage{amsmath, amsthm, amssymb, amsrefs,mathrsfs}
\usepackage[reftex]{theoremref}
\usepackage{esint, bigints}
\usepackage{enumitem}

\theoremstyle{definition}
\newtheorem{definition}{Definition}[section]
\newtheorem{remark}[definition]{Remark}
\newtheorem{example}[definition]{Example}

\theoremstyle{plain}
\newtheorem{proposition}[definition]{Proposition}
\newtheorem{theorem}[definition]{Theorem}
\newtheorem{lemma}[definition]{Lemma}
\newtheorem{corollary}[definition]{Corollary}

\newcommand{\ee}{\ensuremath{\,\mathrm{e}}}

\newcommand{\dd}{\ensuremath{\,\mathrm{d}}}
\newcommand{\R}{\ensuremath{\mathbb{R}}}

\newcommand{\loc}{\ensuremath{\mathrm{loc}}}

\DeclareMathOperator{\Ch}{Ch}
\DeclareMathOperator{\dom}{Dom}
\DeclareMathOperator{\LIP}{Lip}
\DeclareMathOperator{\lip}{lip}

\DeclareMathOperator{\supp}{supp}

\DeclareMathOperator{\CBB}{CBB}
\DeclareMathOperator{\Dyn}{Dyn}
\DeclareMathOperator{\CAP}{Cap}

\DeclareMathOperator{\BICB}{BICB}
\DeclareMathOperator{\RCD}{RCD}
\DeclareMathOperator{\Cone}{Cone}

\title{Analysis on surfaces with locally bounded integral curvature}
\subjclass[2020]{46E36, 35K08, 53C23}
\keywords{heat kernel, Alexandrov surface, bounded integral curvature, subharmonic metric, Dynkin class, Kato-Ricci}

\author{Sebastian Boldt}
\author{Batu Güneysu}
\author{Maxime Marot}

\address{Mathematisches Seminar, Heinrich-Hecht-Platz 6, 24118 Kiel, Germany}
\email{sebastian.boldt@mathematik.tu-chemnitz.de}
\email{gueneysu@math.uni-kiel.de}
\email{marot@math.uni-kiel.de}

\begin{document}

\begin{abstract}
We prove several analytic results on (possibly noncompact) complete singular surfaces having locally bounded integral curvature (in short: BIC surfaces). Regarding these as metric measure spaces with the $2$-dimensional Hausdorff measure, we show that these are infinitesimally Hilbertian, locally doubling and satisfy a local Poincaré inequality. In particular, this entails the existence of a jointly Hölder continuous heat kernel for the Cheeger Laplacian. Assuming that the negative part of the curvature measure of a BIC surface satisfies a Dynkin-type condition, we show that the surface is bi-Lipschitz equivalent to a BIC surface with a lower bounded curvature measure, entailing global variants of the aforementioned results.
\end{abstract}

\maketitle

\tableofcontents

\section{Introduction}
The class of \emph{surfaces with locally bounded integral curvature} (BIC surfaces), going back to Alexandrov and his school \cite{AleksandrovIntrinsicGeometry} in the 1960s and further developed by Reshetnyak \cite{ReshetnyakTwoDimManifolds} and Huber \cite{huberPotentialtheoretischenAspektAlexandrowschen1960} (see also \cites{fillastreSubharmonic2023,troyanov2022alexandrovssurfacesboundedintegral} for an overview), is arguably the largest class of metrically singular surfaces where a natural notion of curvature can be formulated in a way that connects the geometry and the topology of the space: roughly speaking, these are length spaces, homeomorphic to smooth surfaces, whose curvature excess over triangles is only assumed to be locally bounded, with no sign condition imposed. These assumptions, being in big contrast to the more restrictive curvature-bounded-below (CBB) condition of Alexandrov, allow one to model surfaces with conical singularities, highly oscillating curvature, and also cusps. As indicated above, every BIC surface carries a natural curvature measure, which turns out to be a locally bounded signed measure in general, and which allows one to formulate a Gauss-Bonnet theorem in the compact case \cite[Theorem 5 p.192]{AleksandrovIntrinsicGeometry}. Moreover, by the Reshetnyak-Huber uniformization \thref{thm:ReschetnyakHuber} (see \cites{reshetnyakIsothermalI2023, reshetnyakIsothermalII2023, huberPotentialtheoretischenAspektAlexandrowschen1960, chenUniformConvergenceMetrics2025}), every BIC surface can be represented, locally and even globally away from cusps, as a smooth Riemannian surface conformally deformed by a nonsmooth subharmonic function.\vspace{1mm}

Despite the naturality of this class, a systematic study of analysis on BIC surfaces from the viewpoint of Cheeger energies and Sobolev spaces on metric measure spaces appears to be missing from the literature. At the level of global results, this is not surprising at all: On the one hand, if $(S,d)$ is a BIC surface with $\omega$ its curvature measure and $\mu$ its two-dimensional Hausdorff measure, then given $\kappa\in\R$, the natural way of saying that $(S,d)$ has curvature bounded from below by $\kappa$ is to assume $\omega\geq \kappa \mu$, the so-called $\BICB(\kappa)$ condition. Only recently it has been shown by the third-named author in \cite{marot2025notionscurvaturesingularsurfaces} that this condition is -- under weak assumptions -- equivalent to the $\mathrm{CBB}(\kappa)$ condition, which by Petrunin's work \cite{petruninAlexandrovMeetsLottVillaniSturm2011} shows that $(S,d,\mu)$ is an $\RCD(\kappa,2)$ space, noting that on $\RCD(\kappa,N)$ spaces all sorts of global analytic results are available (e.g. locally uniform doubling \cite{sturm2}, local Poincaré inequality \cite{rajalaLocalPoincare2012}, global Gaussian heat kernel bounds \cite{jiangHeatKernelBound2016}). On the other hand, if $\omega^-$ is sufficiently ill-behaved, one escapes the $\RCD(\kappa,2)$ framework entirely and no ready-to-use results are available at all. In fact, the only analytic results we are aware of in this setting are \cite{adamowiczIsoperimetricInequalitiesGeometry2021}, where Adamowicz-Veronelli treat the Dirichlet problem on annuli, and \cite{deruelleKahlerRicciFlows2025}, where Deruelle-Guedj-Guenancia-Zeriahi prove that one can start a Ricci flow from a BIC surface.\vspace{1mm}

The purpose of this article is to fill this gap by providing the foundations of a geometric analysis framework on BIC surfaces. Firstly, we will provide some local results that are valid on BIC surfaces without any additional assumptions on the curvature. Secondly, we will provide global results under a mild global control on the negative part of the curvature, allowing for a large class of non-$\RCD(\kappa,2)/\BICB(\kappa)$ spaces.\vspace{1mm}

To this end, given a complete connected BIC surface $(S,d)$ without cusps and without boundary, together with its two dimensional Hausdorff-measure $\mu$, we prove that: 
\begin{itemize}
\item $(S,d,\mu)$ is an infinitesimally Hilbertian metric measure space in the sense of Ambrosio, Gigli and Savaré \cites{ambrosioCalculus2014, gigliDifferentialStructureMetric2015}, i.e., the Cheeger energy $\Ch$ is a quadratic form, in particular one obtains a canonical self-adjoint Cheeger Laplacian $-\Delta\geq 0$ in $L^2(S,\mu)$,
\item the Carath\'eodory distance $d_{\Ch}$ with respect to the Cheeger energy is equal to the original distance $d$,
\item $(S,d,\mu)$ is locally doubling and its Cheeger energy satisfies a local Poincaré inequality.
\end{itemize}
Combined with Sturm's theory of analysis on strongly local regular Dirichlet spaces \cites{sturmAnalysisLocal1995, sturmAnalysisLocalDirichlet1996}, these results entail the existence of a jointly H\"older continuous and strictly positive heat kernel for $\Delta$, a result which is obviously of fundamental importance for future research on BIC surfaces, e.g. in stochastic analysis. \\
Although it is true \cite{chenUniformConvergenceMetrics2025} that the BIC distance $d$ on $S$ can be approximated by smooth Riemannian distances, this result turns out to be not enough to transfer local Riemannian results to $(S,d)$, as the convergence of the distance functions is only locally uniform. Instead, it turns out that one has to approximate $d$ in a stronger sense by more singular distances: in fact, Burago showed in \cite{buragoBiLipschitzEquivalentAleksandrovSurfaces2005} that $d$ can be approximated in the Lipschitz sense by a sequence of polyhedral distances. In order to make use of this result, we carefully establish that every polyhedral distance actually stems from the length distance function associated with a polyhedrally smooth Riemannian metric, a theory which is based on the abstract framework of Riemannian polyhedra by Eells-Fuglede \cite{eellsHarmonicMapsRiemannian2001}. In order to avoid any possibly cumbersome boundary analysis, we develop a new gluing/localization technique that allows us to implant results from the theory of Riemannian polyhedra conveniently.\vspace{1mm}

Finally, using the Reshetnyak-Huber uniformization theorem, we prove that if the negative part of $\omega$ is in the so-called \emph{Dynkin class} of $(S,d,\mu)$, then $(S,d)$ is globally bi-Lipschitz equivalent to a $\BICB(\kappa)$ space. This result allows us to establish that on such spaces, one has locally uniform volume doubling, a locally uniform Poincaré inequality, and global Gaussian heat kernel bounds, as these results are stable under such an equivalence and valid on $\RCD(\kappa,2)/\BICB(\kappa)$ spaces. This result is much in the spirit of recent results on smooth Riemannian manifolds whose negative part of the Ricci curvature satisfies a Kato/Dynkin assumption (cf. the work of Carron-Mondello-Tewodrose
\cites{carronKatoMeetsBakryEmery2023, carronLimitsKato2024, carronLimitsKato2025, carronStrongBranching2025}, Erbar-Rigoni-Sturm-Tamanini \cite{erbarTamedSpacesDirichlet2020}, Güneysu-Kuwae \cite{gunkuwae}, Rose-Stollmann \cite{roseKatoClass2017}, going back at least to Güneysu-Pallara \cite{gueneysuFunctions2015}).

\vspace{1mm}

The paper is organized as follows: Section \ref{section:background} recalls the necessary background on metric measure spaces, Cheeger energies, BIC surfaces and their curvature measures, the Reshetnyak-Huber uniformization theorem, and polyhedral surfaces. Section \ref{section:heat} contains the local analytic results, Section \ref{section:dynkin} introduces BIC-Dynkin surfaces and proves the bi-Lipschitz deformation result, and discusses an illustrative example of a BIC-Dynkin surface: \emph{a volcano of angle $4\pi$}. Finally, the appendix, Section \ref{section:RiemPolyhedra}, collects the relevant facts on Riemannian polyhedra in the sense of Eells-Fuglede, which underlie the polyhedral-to-BIC transfer used throughout the paper.

\subsection*{Acknowledgments}
The third-named author (M.M.) is particularly grateful to colleagues and friends in Chemnitz for their warm hospitality, and to his PhD advisor, the second-named author (B.G.), for his guidance and support. This research was financially supported by the Deutsche Forschungsgemeinschaft (DFG).

\section{Background}
\label{section:background}

\subsection{Some Notions on Metric Spaces}
\label{section:metricMeasure}
Given a metric space $(X,d)$, we write $B(p,r)$ (resp. $B[p,r]$) for the corresponding open (resp. closed) ball around $p$ of radius $r$ and $\mathcal{H}^\alpha$ denotes the Hausdorff measure of dimension $\alpha$. The symbol $C(X)$ stands for continuous functions and $\LIP(X,d)$ for Lipschitz functions, where for function spaces an index 'c' will always stand for 'compactly' supported and an index 'bs' will always stand for 'boundedly' supported. \vspace{1mm}

The length of a curve\footnote{We understand all curves to be continuous.} $\gamma:[a,b]\to X$ is defined as 
$$
L(\gamma):=\sup\left\{\sum^n_{j=1}d(\gamma(t_{j-1}),\gamma(t_j))\>\>|\>\>\text{$a=t_0<\cdots <t_n=b$ is a partition of $[a,b]$}\right\}
$$
and a curve is called \emph{rectifiable}, if it has a finite length. One obtains the underlying length pseudodistance via
$$
d^L(x,y):=\inf\{L(\gamma)\>|\>\text{$\gamma:[0,1]\to X$ is a rectifiable curve with $\gamma(0)=x$ and $\gamma(1)=y$. }\}
$$
The metric space $(X,d)$ is called a \textit{length space} if $d=d^L$, and \textit{geodesic} if there always exists a shortest curve between any two points. \vspace{1mm}

Given any $f:X\to \mathbb{R}$ the number
\[
\LIP(f) := \sup_{x,y\in X} \frac{|f(y)-f(x)|}{d(x,y)}
\]
denotes its \textit{global Lipschitz constant}. We can also define the \textit{local Lipschitz constant}, or \textit{slope}, by
\[
\lip(f)(x) := \limsup_{y\to x} \frac{|f(y)-f(x)|}{d(x,y)}
\]
for $x\in X$ an accumulation point and $\lip(f)(x):=0$ otherwise. Clearly $\lip(f)(x) \leq \LIP(f)$ for all $x\in X$. \vspace{1mm}

A continuous curve $\gamma:[0,1]\to X$ is said to be \textit{absolutely continuous}, if there exists $f\in L^1[0,1]$ such that
$$
d(\gamma(s), \gamma(t))\leq\int_s^t f(\tau)\dd\tau\quad\text{for all $0\leq s < t \leq 1$}.
$$

The \textit{cone over $(X,d)$} is the metric space given by 
$$
\Cone(X) := ([0,\infty)\times X) /\sim,
$$
where $(0,x)\sim (0,y)$ for any $x,y\in X$, with the distance
\[
d_{\Cone} \big((r_1, x_1), (r_2, x_2)\big) := \sqrt{r_1^2 + r_2^2 - 2r_1r_2\cos\big(\min\big(\pi, d(x_1,x_2)\big)\big)}.
\]
The equivalence class represented by $(0,x)$ is called the \textit{tip} of $\Cone(X)$.\vspace{1mm}

If $(d_n)$ is a sequence of distances on $X$ we say that \emph{$d_n\to d$ in the Lipschitz sense as $n\to\infty$} if for every $\varepsilon>0$ there exists $N$ such that for all $n\geq N$ one has
\[
(1-\varepsilon) d \leq d_n \leq (1+\varepsilon) d.
\]

\subsection{Sobolev calculus and Cheeger form}
\label{section:SobolevSpaces}

A \textit{metric measure space} is a triple $(X,d,\mu)$ with
$(X,d)$ a complete separable metric space and $\mu$ a nonnegative Borel measure on $X$ which is finite on open balls. \vspace{1mm}

The Cheeger energy is defined by
\[
\Ch(f): = \inf\liminf_{n\to\infty}  \int_X \lip(f_n)^2\dd\mu,
\]
where the infimum is taken over every sequence $(f_n)\in\LIP(X,d)\cap L^2(X,\mu)$ such that $f_n\to f$ in $L^2(X,\mu)$. The Cheeger energy yields a notion of \textit{Sobolev space} defined by
\[
W^{1,2}(X,d,\mu)  := \{ f\in L^2(X,\mu) \mid \Ch(f)<\infty \}.
\]
We endow this space with a norm
\[
\lVert f \rVert_{W^{1,2}}^2 :=\lVert f \rVert_{L^{2}}^2  + \Ch(f),
\]
turning $W^{1,2}(X,d,\mu)$ into a Banach space. In addition, $W^{1,2}(X,d,\mu)$ is a dense subspace of $L^2(X,\mu)$. \vspace{1mm}

For any $f\in W^{1,2}(X,d,\mu)$, an element $0\leq G\in L^2(X,d,\mu)$ is said to be an \textit{asymptotic relaxed slope for $f$}, if there exists a sequence $(f_n)\subset \mathrm{Lip}(X,d)$ such that $f_n\to f$ strongly in $L^2(X,\mu)$ and $\mathrm{lip}(f_n)\to\widetilde G$
weakly for some $\widetilde G \in L^2(X,\mu)$ with $\widetilde G \leq G$. Then one has
$$
W^{1,2}(X,d,\mu)=\{f\in L^2(X,\mu)\>\>|\text{ $f$ has an asymptotic relaxed slope}\}.
$$
Given $f\in W^{1,2}(X,d,\mu)$ there exists a minimal element for the $L^2$-norm of the set of asymptotic relaxed slopes which is denoted by $|Df|$ and called the \textit{minimal weak upper gradient of $f$}. This element is also minimal $\mu$-a.e., and there holds
\[
\Ch(f) = \int_X |Df|^2\dd\mu\quad\text{for all $f\in W^{1,2}(X,d,\mu)$}.
\]
One has the locality rule
$$
|Df_1|=|Df_2|\quad \text{$\mu$-a.e. in $\{f_1=f_2\}$, for all $f_1,f_2\in W^{1,2}(X,d,\mu)$.}
$$
It is straightforward to see that if $d'$ is another distance function on $X$ and $\mu'$ is another measure on $X$ such that
$$
(1/\alpha) d'\leq d \leq \alpha d'\quad\text{for some constant $\alpha\geq 1$},
$$
and
$$
(1/\beta) \mu'\leq \mu \leq \beta \mu'\quad\text{for some constant $\beta\geq 1$},
$$
then $W^{1,2}(X,d,\mu)=W^{1,2}(X,d',\mu')$ with equivalent norms and moreover, with an obvious notation,
\begin{align}\label{ecpoop}
(1/\alpha)|Df|_{d',\mu'}\leq |Df|_{d,\mu}\leq \alpha|Df|_{d',\mu'}\quad\text{$\mu/\mu'$ a.e. in $X$.}
\end{align}

The space $ (X,d,\mu)$ is said to be \textit{infinitesimally Hilbertian} if $W^{1,2}(X,d,\mu)$ is a Hilbert space, i.e.\, $\Ch$ is a quadratic form. This is equivalent to the validity of the parallelogram identity
$$
|D(f_1+f_2)|^2+|D(f_1-f_2)|^2=2|Df_1|^2+2|Df_2|^2\quad\text{for all $f_1,f_2\in W^{1,2}(X,d,\mu)$}.
$$
In the infinitesimally Hilbertian case, $\Ch$ with the definition domain $W^{1,2}(X,d,\mu)$ is a densely defined closed nonnegative quadratic form in $L^2(X,\mu)$, so by Kato's theory there is a uniquely determined nonpositive self-adjoint operator $\Delta$ in $L^2(X,\mu)$, the \emph{Cheeger Laplacian}, such that $\dom(\Delta)\subset W^{1,2}(X,d,\mu)$ and 
$$
\langle -\Delta f_1,f_2\rangle_{L^2}=\Ch(f_1,f_2)\quad\text{for all $f_1\in\dom(\Delta)$, $f_2\in W^{1,2}(X,d,\mu)$}.
$$

The local Sobolev space $W^{1,2}_\loc(X,d,\mu)$ is defined to be the space of all $f\in L^2_\loc(X,\mu)$ such that for all bounded $B\subset X$ there exists $f_B\in W^{1,2}(X,d,\mu)$ such that $f=f_B$ $\mu$-a.e. on $B$. For any $f\in W^{1,2}_\loc(X,d,\mu)$ one sets $|Df|:=|Df_B|$ on $B$ for any bounded $B$ and any $f_B\in W^{1,2}(X,d,\mu)$ such that $f=f_B$ $\mu$-a.e. on $B$. This is well defined by the locality rule.

\begin{theorem}\label{regula} Assume that $(X,d,\mu)$ is infinitesimally Hilbertian. Then $\Ch$ with the domain of definition $W^{1,2}(X,d,\mu)$ is a strongly local Dirichlet form in $L^2(X,\mu)$ that has $\mathrm{Lip}_{\mathrm{bs}}(X,d)$ as the core of the form. In other words, $\mathrm{Lip}_{\mathrm{bs}}(X,d)\subset W^{1,2}(X,d,\mu)$ is dense. In particular, if $(X,d)$ is locally compact and proper, then $\Ch$ with the domain of definition $W^{1,2}(X,d,\mu)$ is a strongly local regular Dirichlet form in $L^2(X,\mu)$.
\end{theorem}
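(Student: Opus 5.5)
The plan is to verify the Dirichlet form axioms one by one, using the standard properties of $\Ch$ from Ambrosio--Gigli--Savaré. Most of the work is showing that $\mathrm{Lip}_{\mathrm{bs}}(X,d)$ is a form core.

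\textbf{Closedness and the Markov property.} Since $(X,d,\mu)$ is infinitesimally Hilbertian, $\Ch$ is a quadratic form. Being defined as a relaxation (an $L^2$-lower semicontinuous envelope), it is lower semicontinuous in $L^2(X,\mu)$, hence closed. Its domain $W^{1,2}(X,d,\mu)$ is dense in $L^2(X,\mu)$, as recalled above. For the Markov property, I take a normal contraction $\phi:\R\to\R$ with $\phi(0)=0$ and $\LIP(\phi)\le 1$. Given Lipschitz $f_n\to f$ in $L^2$, the functions $\phi\circ f_n$ are Lipschitz, converge to $\phi\circ f$ in $L^2$, and satisfy $\lip(\phi\circ f_n)\le\lip(f_n)$ pointwise. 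Passing to the infimum gives $\Ch(\phi\circ f)\le\Ch(f)$.

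\textbf{Strong locality.} I would use the chain rule for minimal weak upper gradients, $|D(\phi\circ f)|=|\phi'(f)||Df|$, together with locality: $|Df|=0$ $\mu$-a.e. on $\{f=c\}$. Suppose $f,g\in W^{1,2}$ with $g$ constant on a neighbourhood of $\supp f$ (or, more generally, on $\supp(f\mu)$). Then $|Dg|=0$ a.e. on $\{f\neq0\}$ and $|Df|=0$ a.e. where $f=0$. The parallelogram identity gives polarization,
\[
\Ch(f,g)=\tfrac14\int_X\big(|D(f+g)|^2-|D(f-g)|^2\big)\dd\mu .
\]
On each of the two regions one of $|Df|,|Dg|$ vanishes, and locality gives $|D(f\pm g)|=|Dg|$ or $=|Df|$ there, respectively. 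Hence the integrand vanishes a.e. and $\Ch(f,g)=0$.

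\textbf{Core property.} This is the main step. First, in the infinitesimally Hilbertian setting, Ambrosio--Gigli--Savaré show that Lipschitz functions in $L^2$ with $\lip(f)\in L^2$ are dense in energy. By the definition of $\Ch$ there are Lipschitz $f_n\to f$ in $L^2$ with $\int\lip(f_n)^2\dd\mu\to\Ch(f)$. Then $\Ch(f_n)\le\int\lip(f_n)^2\dd\mu$ and lower semicontinuity give $\limsup\Ch(f_n)\le\Ch(f)\le\liminf\Ch(f_n)$. So $f_n\to f$ weakly in the Hilbert space $W^{1,2}$ with convergence of norms, hence strongly.

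Next I cut off to bounded support. Fix $x_0$ and set $\chi_R=\max(0,\min(1,R+1-d(x_0,\cdot)))$, which is $1$-Lipschitz and equal to $1$ on $B(x_0,R)$. Then $\chi_R f_n\in\mathrm{Lip}_{\mathrm{bs}}$, and the Leibniz bound $|D(\chi_R g)|\le\chi_R|Dg|+|g|\mathbf 1_{X\setminus B(x_0,R)}$ together with dominated convergence gives $\chi_R g\to g$ in $W^{1,2}$ as $R\to\infty$. A diagonal argument then finishes the density of $\mathrm{Lip}_{\mathrm{bs}}(X,d)$. The main obstacle I expect is making the energy convergence rigorous, in particular uniform convexity of the Hilbert norm; I would cite \cite{ambrosioCalculus2014} for the density in energy of Lipschitz functions.

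\textbf{Regularity under properness.} If $(X,d)$ is proper, bounded sets are relatively compact, so $\mathrm{Lip}_{\mathrm{bs}}=\mathrm{Lip}_c\subset C_c(X)$. Stone--Weierstrass (Lipschitz functions separate points and form a lattice algebra) gives uniform density of $\mathrm{Lip}_c$ in $C_c(X)$. Hence $\mathrm{Lip}_c$ is a core that is both $W^{1,2}$-dense and uniformly dense in $C_c$, which is exactly regularity.
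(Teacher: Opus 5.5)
Your proposal is correct and follows the paper's route. The key step is identical: a recovery sequence of Lipschitz functions gives energy density, and weak convergence with convergence of norms in the Hilbert space $W^{1,2}$ (uniform convexity) upgrades this to norm density, which is exactly the paper's one-line argument. The rest is sound and only makes explicit what the paper leaves to the references: closedness, the Markov property, strong locality, the cut-off with $\chi_R$ to reach $\mathrm{Lip}_{\mathrm{bs}}$ (instead of citing energy density of $\mathrm{Lip}_{\mathrm{bs}}$ directly), and Stone--Weierstrass for regularity.
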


\begin{proof} As $W^{1,2}(X,d,\mu)$ is a Hilbert space, it is uniformly convex and thus
the density in energy of $\mathrm{Lip}_{\mathrm{bs}}(X,d)$, see \cite{ags} and also the recent \cite{eriksson-biqueDensityLipschitzFunctions2023}, transforms into norm density in $W^{1,2}(X,d,\mu)$.
\end{proof}

As with any strongly local Dirichlet form, associated to $\Ch$, we can define a pseudo-distance $d_{\Ch}$, called \textit{Carathéodory distance}, by
\[
d_{\Ch}(x,y) := \sup\{ f(y) - f(x) \mid f\in W^{1,2}_\loc(X,d,\mu)\cap C(X)~\text{s.t.}~|Df| \leq 1\}.
\]

\subsection{BIC surfaces and subharmonic metrics}
Let $(X,d)$ be a metric space and let $x\in X$.
Let $\gamma, \tilde\gamma: [0,1]\to X$ be two curves with $\gamma(0)=\tilde\gamma(0)=x$.
For any $t\in[0,1]$, there is a unique triangle (up to isometry) in the Euclidean plane such that the three edges have length
$d(x,\gamma(t)), d(\gamma(t), \tilde\gamma(t))$ and $d(\tilde\gamma(t), x)$.
We denote by $\bar\alpha(t)$ the angle at the vertex opposite to the side $d(\gamma(t), \tilde\gamma(t))$ in this Euclidean model triangle.
With that we can define the \textit{upper angle} $\alpha$ between $\gamma$ and $\tilde\gamma$ at $x$ by
\[
\alpha := \limsup_{t\to 0} \bar\alpha(t).
\]
A \textit{triangle} in $(X,d)$ is the data of three points and three minimizing geodesics joining them.
For a triangle $T$, the \textit{upper excess} is given by
\[
\delta(T) := \alpha + \beta + \gamma - \pi
\]
where $\alpha,\beta,\gamma$ are the upper angles at the vertices of $T$. When $\mathcal{T}$ is a finite family of non-overlapping triangles, then
\[
\delta(\mathcal{T}) := \sum_{T\in\mathcal{T}} \delta(T),
\]
where $\delta(\mathcal{T})=0$ for $\mathcal{T}=\varnothing$. A triangle $T$ is said to be \textit{convex relative to the boundary} if no couple of points on the boundary of $T$ can be joined by a curve lying outside $T$ and shorter than the part of the boundary joining the points. A triangle is said to be \textit{simple} if it is homeomorphic to a disc and convex relative to the boundary.\vspace{1mm}

In the sequel, we understand all manifolds to be without boundary - unless stated otherwise - and a smooth surface (with boundary) is understood to be a smooth $2$-dimensional manifold (with boundary).

\begin{definition}
A length space $(S,d)$ is called a \textit{surface with locally bounded integral curvature}, in short, a \textit{BIC surface}, if
\begin{itemize}
\item $S$ is a smooth connected surface,
\item $d$ induces the original topology of $S$,
\item for every compact $K$ there exists a constant $C=C(K)>0$ such that $\delta(\mathcal{T})\leq C$
for any finite collection $\mathcal{T}$ of simple non-overlapping triangles in $(S,d)$ contained in $K$.
\end{itemize}
\end{definition}
The central object associated with such a singular surface is its \textit{curvature measure}, which is a signed measure $\omega$ on $S$ defined as follows: with the usual conventions for the positive and the negative part, $(\cdot)^+=\max(0,\cdot)$ and $(\cdot)^-=-\min(0,\cdot)$, on any open $U\subset S$ one defines
\[
\omega^\pm(U) := \sup_{\mathcal{T}} \sum_{T\in\mathcal{T}}\delta(T)^\pm,\quad
\]
with the suprema taken on finite families $\mathcal{T}$ of non-overlapping simple triangles contained in $U$. We extend $\omega^\pm$ to an arbitrary Borel subset $E\subset S$ by
\[
\omega^\pm(E) := \inf_{U\supset E~\text{open}} \omega^\pm(U)
\]
and set 
$$
\omega := \omega^+- \omega^-.
$$
It is straightforward that $\omega^+$ is locally finite, but for $\omega^-$ this is a highly non-trivial fact of the theory, see \cite[Theorem 15 p.134]{AleksandrovIntrinsicGeometry}.
In view of \cite[Theorem 2.18]{zbMATH03228674} the curvature measure is ultimately a signed Radon measure.\vspace{1mm}

A point $p\in S$ with $\omega(\{p\})= 2\pi$ is called a \emph{cusp}.
This name comes from the fact that if $\omega(\{p\})= 2\pi$, then $p$ can be interpreted as a point with no angle around it, see \cite[Theorem 8 p.166]{AleksandrovIntrinsicGeometry}.\vspace{2mm}

\begin{remark}
It is a nontrivial fact that the $2$-dimensional Hausdorff measure on $(S,d)$ is locally finite.
Indeed, by \cite{zbMATH03218459} the 2-dimensional Hausdorff measure coincides with the area measure defined in \cite[Chapter VIII]{AleksandrovIntrinsicGeometry}.
The latter is locally finite by \cite[Theorem 2 p.261]{AleksandrovIntrinsicGeometry}.
\end{remark}

We now come to another, more analytic, description of BIC surfaces: assume $(S,h)$ is a smooth Riemannian surface with $\Delta_h$ the associated Laplace-Beltrami operator, $\mu_h$ the volume measure and $d_h$ the Riemannian distance. Given $u\in L^1_\loc(S,\mu_h)$, we say that a signed Radon measure $\nu$ is the \textit{distributional Laplacian of $u$ with respect to $h$}, if 
$$
\int_S (\Delta_h\varphi)  u\dd\mu_h = \int_S \varphi\dd\nu\quad\text{for all $\varphi\in C_c^\infty(S)$},
$$
where $\mu_h$ denotes the Riemannian volume measure. In that case, $\nu$ is uniquely determined and one sets $\underline{\Delta}_h u:=\nu$. Integrating by parts, one gets the consistency
$$
\underline{\Delta}_h u= (\Delta_h u)\mu_h\quad\text{for all $u\in C^2(S)$}.
$$
We define $\mathcal{V}(S,h)$ as the set of functions in $L^1_\loc(S,\mu_h)$ whose distributional Laplacian is a signed Radon measure. We record that (see \cite[Remark 4.56]{fillastreSubharmonic2023}) for all $q\in[1,2)$ one has 
$$
\mathcal{V}(S,h)\subset W^{1,q}_{\mathrm{loc}}(S,d_h,\mu_h)
$$

Note that these local function spaces $L^q_\loc (S,\mu_h)$ and $W^{1,q}_{\mathrm{loc}}(S,d_h,\mu_h)$ of course do not depend on $h$ and are actually defined via their local Euclidean counterparts through smooth charts.\vspace{1mm}

The pseudo-distance $d_{h,u}$, with $u\in\mathcal{V}(S,h)$, defined for any $x,y\in S$, by
\[
d_{h,u}(x,y) := \inf\left\{ \int_0^1 \ee^{u(\gamma(t))}|\gamma'(t)|_h\dd t\>\> \middle|\>\>
\gamma:[0,1]\to (S,d_h)~\text{Lipschitz s.t.}~\gamma(0)=x, \gamma(1)=y \right\},
\]
is called a \textit{subharmonic distance}. Note that this definition uses that any $u\in\mathcal{V}(S,h)$ is well-defined away from a polar set $N_u$ (Hausdorff dimension 0) such that $\gamma^{-1}(N_u)$ is a null set in $[0,1]$ for every Lipschitz curve $\gamma:[0,1]\to (S,d_h)$, see \cite[Lemma 4.72]{fillastreSubharmonic2023}.
\vspace{2mm}

The following theorem highlights the equivalence between BIC surfaces and subharmonic distances:

\begin{theorem}[Reshetnyak-Huber]\th\label{thm:ReschetnyakHuber}
Let $S$ be a connected smooth surface.\\
\emph{(i)} If $h$ is a smooth Riemannian metric on $S$ and $u\in\mathcal{V}(S,h)$ are such that $d_{h,u}$ has no point at infinite distance
(that is, a point for which every path containing it has infinite length),
then $(S,d_{h,u})$ is a BIC surface, the underlying $2$-dimensional Hausdorff measure $\mu_{h,u}$ is given by 
$$
\dd \mu_{h,u}= \ee^{2u} \dd\mu_h,
$$
and its curvature is given by 
\[
\omega_{h,u} = \mathbb{K}_h \mu_h + \underline{\Delta}_h u,
\]
where $\mathbb{K}_h\in C^\infty(S)$ is the Gaussian curvature of $h$. \\
\emph{(ii)} Conversely, if $(S,d)$ is a BIC surface, then there exist a smooth Riemannian metric $h$ on $S$ and a function $u\in\mathcal{V}(S,h)$ such that $d=d_{h,u}$. 
\end{theorem}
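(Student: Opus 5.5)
This theorem is classical and is cited from the literature; I would prove it by assembling the known ingredients rather than from scratch.

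\textbf{Part (i).} The plan is local-to-global approximation. First reduce to a coordinate disc. Choose isothermal coordinates for $h$, so that $h=\ee^{2\phi}|dz|^2$ with $\phi$ smooth. Then $d_{h,u}=d_{|dz|^2,u+\phi}$ locally, and $\underline{\Delta}_h u$ transforms consistently: $\mathbb{K}_h\mu_h+\underline{\Delta}_h u = \underline{\Delta}_{\mathrm{eucl}}(u+\phi)$ as measures, with the sign convention used in the statement. It therefore suffices to treat $v:=u+\phi$ on a Euclidean disc, whose distributional Laplacian is a signed Radon measure $\nu$.

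\textbf{Approximation step.} Write $v$ via Riesz decomposition as a logarithmic potential of $\nu$ plus a harmonic function. Approximate $\nu$ weakly by smooth densities, or alternatively by finite sums of Dirac masses of mass $<2\pi$, which give polyhedral cones. This produces smooth conformal factors $v_n\to v$ with $\underline{\Delta} v_n\to\nu$ and $|\nu_n|$ locally uniformly bounded.

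\textbf{Distance convergence (main obstacle).} I need $d_{v_n}\to d_v$ locally uniformly. The standing hypothesis of no points at infinite distance is exactly the condition $\nu(\{p\})<2\pi$, which is what makes $\ee^{v}$ integrable along curves near atoms. The expected tool is the Reshetnyak-type estimate $\int_{\text{segment}}\ee^{v}\lesssim$ (length)$^{1-\nu^+(B)/2\pi}$, which gives equicontinuity. The delicate part is controlling concentrations of $\nu^+$ near $2\pi$.

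\textbf{Conclusion of (i).} For smooth $v_n$, Gauss--Bonnet bounds the triangle excesses by $|\nu_n|(K)$, uniformly in $n$. Alexandrov's convergence theorem (limits of BIC metrics with uniformly bounded $|\omega|$ are BIC, and curvature measures converge weakly) then yields that $(S,d_{h,u})$ is BIC with $\omega=\nu$. For the area, the Hausdorff measure of $d_v$ is $\ee^{2v}\mathcal{L}^2$, since $d_v$ is infinitesimally $\ee^{v}|\cdot|$ at Lebesgue points and these have full measure. This is consistent with the identification of Hausdorff and Alexandrov area cited in the preceding Remark.

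\textbf{Part (ii).} The plan is Reshetnyak's isothermal-coordinates theorem. Locally, approximate $d$ by polyhedral metrics (Alexandrov–Zalgaller). Each polyhedral metric is conformally flat with a factor $\ee^{v_n}$, where $v_n$ is the log potential of its cone-angle masses, by explicit uniformization of flat cones. Uniform bounds on $|\omega_n|$ give compactness of the $v_n$ in $L^1_{\loc}$, and the normalized uniformizing maps converge by a quasiconformality or equicontinuity argument. The limit gives local isothermal charts in which $d=d_{\mathrm{eucl},v}$.

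\textbf{Gluing.} The transition maps between these charts are conformal, since conformal factors determine the distance. They therefore define a Riemann surface structure on $S$, and one picks any smooth metric $h$ in that conformal class. The local functions then patch to a global $u\in\mathcal{V}(S,h)$ with $d=d_{h,u}$, the equality holding because both sides are length metrics that agree locally.
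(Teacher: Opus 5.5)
The paper does not prove this theorem. It quotes Theorem 2.23 of Fillastre's notes (or Theorem 7.3 of Troyanov's survey), and Fillastre's Theorem 4.176 for the area formula. Your outline follows Reshetnyak's approximation route, which is the standard one, but one step is false, and it causes you to lose part of the statement.

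You assert that having no point at infinite distance is exactly the condition $\nu(\{p\})<2\pi$ for all $p$. Only the implication from right to left holds. On $\{|z|<1/2\}$ take $v(z)=-\ln|z|-2\ln\ln(1/|z|)$. Then $\ee^{v}=|z|^{-1}\ln^{-2}(1/|z|)$, and the radial segment from $0$ to $|z|=r$ has length $1/\ln(1/r)<\infty$, so no point is at infinite distance. Yet the curvature measure is $2\pi\delta_0-2|z|^{-2}\ln^{-2}|z|\,\mathcal{L}^2$, with an atom of mass exactly $2\pi$ at $0$. In geodesic polar coordinates this is the horn $ds^2+s^4\,d\theta^2$, a cusp in the paper's sense. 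So (i) deliberately admits cusps, and (ii) is stated for all BIC surfaces, cusps included; the paper excludes cusps only later, as an extra assumption.

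Near a cusp your distance-convergence step has nothing to work with. The bound $\int\ee^{v}\lesssim(\text{length})^{1-\nu^+(B)/2\pi}$ is void for every ball $B$ containing the cusp. Reshetnyak's convergence theorem also needs the weak limits of the $\nu_n^+$ to have atoms $<2\pi$; weak convergence of $\nu_n$ plus a bound on $|\nu_n|$ is not enough. The analogous estimates for equicontinuity of the uniformizing maps in (ii) degenerate the same way, because the approximating polyhedral curvatures concentrate positive mass $2\pi$ at a cusp. As written, your argument proves (i) and (ii) only for cusp-free metrics. The cusps are isolated, since $\nu^+$ is locally finite, but they need a separate local argument that you do not supply.

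Two further steps rest on reasons that do not work.

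First, the area formula. Being a Lebesgue point of $\ee^{v}$ says nothing about $d_v$, which is an infimum of line integrals. The subharmonic part of $v$ can equal $-\infty$ on a dense polar set, so $\ee^{v}$ along curves is not controlled by its planar averages. The almost-everywhere infinitesimal statement is true, but it needs finer information: fine continuity of the potentials and integrability along curves at atom-free points, plus a proof that $\mathcal{H}^2_{d_v}\ll\mathcal{L}^2$. This is what the paper imports from Fillastre.

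Second, the gluing step. ``Conformal factors determine the distance'' is the wrong direction. You need that a homeomorphism pulling one subharmonic distance back to another is conformal or anticonformal. That again rests on the infinitesimal structure, via $1$-quasiconformality.

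Finally, the smooth structure of the Riemann surface you build need not be the given one on $S$, so choosing $h$ in that conformal class does not give an $h$ smooth on the original $S$. This is a defect of the literal statement rather than of your method. Take $f(x,y)=(x^{1/3},y)$; then $f^*d_{\mathrm{eucl}}$ is a flat BIC distance on the standard $\R^2$. If it equalled $d_{h,u}$, then $u$ would be smooth by elliptic regularity and $f$ would be a Riemannian isometry, hence smooth by Myers--Steenrod, which it is not. Your construction proves the correct version, up to a homeomorphism of $S$.
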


\begin{proof}
See Theorem 2.23 in \cite[Chapter 2]{fillastreSubharmonic2023} or \cite[Theorem 7.3]{troyanov2022alexandrovssurfacesboundedintegral}.
The equality between $2$-dimensional Hausdorff measure and the integral of the volume form is given by \cite[Theorem 4.176]{fillastreSubharmonic2023}.
\end{proof}

These facts show that it is reasonable to consider $\ee^{2u}h$ as a (nonsmooth) Riemannian metric on $S$.

\begin{definition}
Let $(S,d)$ be a BIC surface with its curvature measure $\omega$ and its $2$-dimensional Hausdorff measure $\mu$. Let $\kappa\in\mathbb{R}$. We say that $(S,d)$ has \textit{curvature bounded below by $\kappa$}, and shortly $(S,d)$ is $\BICB(\kappa)$ if one has $\omega\geq\kappa\mu$ in the sense of set functions.
\end{definition}

\begin{remark}\label{edpopo} It is a highly nontrivial recent result by the third-named author (\cite[Theorem 1]{marot2025notionscurvaturesingularsurfaces}) that all complete members of $\BICB(\kappa)$ without cusps are members of $\mathrm{CBB}(\kappa,2)$ - the class of complete metric spaces having Hausdorff dimension $2$ and curvature bounded from below by $\kappa$ in the sense of Alexandrov. On the other hand, it has been shown in \cite{petruninAlexandrovMeetsLottVillaniSturm2011} that every surface in $\mathrm{CBB}(\kappa,2)$ together with its $2$-dimensional Hausdorff measure lies in $\mathrm{RCD}(\kappa,2)$ - the class of infinitesimally Hilbertian metric measure spaces having synthetic Ricci curvature bounded from below by $\kappa$ and synthetic dimension $\leq 2$.
\end{remark}

\subsection{Polyhedral surfaces}

In this section we treat a particular case of BIC surfaces called polyhedral surfaces. These surfaces are roughly speaking made of flat triangles glued isometrically along the edges and they will serve as a good approximating model of BIC surfaces.

\begin{definition}
a) Let $\theta>0$ and let $h>0$. We denote by $A(\theta,h)$ the subset of $\Cone([0,\theta])$ consisting of points $(r,x)$ such that $r<h$.
The metric space $A(\theta,h)$ is called \textit{truncated cone over a segment of angle $\theta$ and radius $h$}.\\
b) Let $C(\theta)\subset \R^2$ denote the circle of length $\theta>0$ around $0$.
We denote by $Q(\theta, h)$ the subset of $\Cone(C(\theta))$ consisting of points $(r,x)$ such that $r<h$.
The set $Q(\theta,h)$ is called \textit{truncated cone over a circle of angle $\theta$ and height $h$}.
\end{definition}

To visualize geometrically, when $\theta\leq 2\pi$ the space $A(\theta, h)$ is a cone in the plane of angle $\theta$
and $Q(\theta, h)$ is a cone in $\R^3$ of angle $\theta$. When $\theta>2\pi$, neither cone can be isometrically embedded in Euclidean space.

\begin{definition}\label{polyc} Let $P$ be a connected smooth surface with boundary, endowed with a distance $d$. We say that $(P,d)$ is a \textit{polyhedral surface} and that $d$ is a \textit{polyhedral distance}, if the following conditions hold:
\begin{enumerate}[label=(\roman*)]
\item $d$ is a length metric that induces the original topology of $P$,
\item every interior point $p\in P$ has a neighborhood isometric onto a cone of type $Q(\theta, h)$ where $\theta>0$ and $h>0$
and $p$ is mapped to the tip,
\item every boundary point $p\in P$ has a neighborhood isometric onto a cone of type $A(\theta, h)$ where $\theta>0$ and $h>0$
and $p$ is mapped to the tip.
\end{enumerate}
\end{definition}

The neighborhoods in (ii) and (iii) are called \textit{conical model neighborhoods}, and the angles $\theta$ in (ii) and (iii) do not depend on the neighborhood around a point $p\in P$. Indeed, let $p\in P$ and $U$ be a neighborhood isometric to $A(\theta, h)$ for some $\theta>0$ and $h>0$.
Then for $r<h$, the length of the circle centered at $p$ and with radius $r$, written $l(p,r)$ is given by $\theta r$.
Thus we obtain that
\[
\theta = \lim_{r\to 0} \frac{l(p,r)}{r}.
\]
This shows that $\theta$ is defined independently of $U$. The same reasoning applies for $Q(\theta, h)$ and by taking the length of an arc. The quantity $\theta(p)$, defined above, is called the \textit{total angle at $p$}.\\
If $p\in P$ is an interior point and $\theta(p)\neq 2\pi$ then $p$ is called a \textit{surface vertex} and if $p$ is a boundary point and $\theta(p)\neq\pi$ then $p$ is called a \textit{boundary surface vertex}. Note that surface vertices and boundary surface vertices are isolated points.\vspace{1mm}

\begin{remark}\label{espo} Given a smooth connected surface with boundary $P$ there exists a unique LBL structure on $P$ (cf. Definition \ref{lbldef}) given by any smooth Riemannian distance on $P$, and it is straightforward to check this is the same LBL structure given any polyhedral distance on $P$. We will refer to this LBL structure as the canonic LBL structure on $P$. 
\end{remark}

For any boundary point $p\in \partial P$, if any, we define the \textit{turn of the boundary at $p$} to be the quantity 
$$
\kappa(p):=\pi-\theta(p).
$$
First, for any interior point $p\in P$, we define the \textit{curvature at} $p$ as the quantity
\[
\omega(p) := 2\pi - \theta(p).
\]
With $V$ the set of surface vertices, for any Borel set $E\subset P$, we define
\[
\omega(E) := \sum_{p\in V} \omega(p)\delta_p(E),
\]
where $\delta_p$ is the Dirac measure at $p$. Likewise, one defines
$$
\kappa(E):=\sum_{p\in bV} \kappa(p)\delta_p(E),
$$
where $bV$ denotes the set of boundary surface vertex points. Both $\omega$ resp. $\kappa$ are signed Radon measures on $P$, called \emph{polyhedral curvature measure} resp. \emph{polyhedral turn measure}.

\begin{theorem}[Gauss-Bonnet for polyhedral surfaces] For every compact polyhedral surface $(P,d)$ one has
\[
\omega(P) + \kappa(\partial P) = 2\pi\chi(P).
\]
\end{theorem}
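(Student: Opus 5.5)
The plan is to triangulate $P$ by geodesic Euclidean triangles and reduce the claim to a combinatorial count via Euler's formula. The argument runs in four steps.

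\textbf{Step 1: geodesic triangulation.} Since $P$ is compact, only finitely many surface vertices and boundary surface vertices occur, because these points are isolated. I would first produce a finite triangulation of $P$ with the following properties:
\begin{itemize}
\item every surface vertex and every boundary surface vertex is a vertex of the triangulation;
\item every edge is a geodesic segment;
\item every face is isometric to a flat Euclidean triangle.
\end{itemize}
To build it, cover $P$ by finitely many conical model neighborhoods $Q(\theta,h)$ and $A(\theta,h)$, using Lebesgue numbers. Away from the cone points the metric is locally Euclidean. I would then subdivide: near each tip, cut the cone into thin radial sectors, each isometric to a planar triangle. A sector of angle smaller than $\pi$ truncated by a chord is flat. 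This gives a cellulation by flat geodesic polygons, which can be refined into triangles. Making these local pieces compatible globally is the main technical obstacle.

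To handle it, I would first fix a sufficiently fine "Delaunay-like" net of points containing all vertices. Then I would use the fact that short geodesics between nearby points lie in flat charts. Alternatively, one can take any topological triangulation fine enough that each triangle lies in a model neighborhood, and straighten its edges there. Either way, the goal is to obtain a triangulation with $V$ vertices, $E$ edges and $F$ faces, such that $\chi(P)=V-E+F$.

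\textbf{Step 2: angle count.} Each face is Euclidean, so its angle sum is $\pi$. Summing over all faces, the total of all angles equals $\pi F$. Grouping the same total by vertex instead gives $\sum_{p\ \mathrm{interior}}\theta(p)+\sum_{p\in\partial P}\theta(p)$. Writing $V=V_i+V_b$ for the numbers of interior and boundary vertices, and using $\theta=2\pi-\omega$ at interior points and $\theta=\pi-\kappa$ at boundary points, we get
\[
\pi F = 2\pi V_i - \omega(P) + \pi V_b - \kappa(\partial P).
\]
This uses that vertices of the triangulation that are not surface vertices have $\theta=2\pi$ (interior) or $\theta=\pi$ (boundary), so they contribute nothing to $\omega$ or $\kappa$.

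\textbf{Step 3: edge count.} Each face has three edge-sides. Interior edges are shared by two faces, while boundary edges belong to one face. Hence $3F=2E-E_b$, where $E_b$ is the number of boundary edges. The boundary consists of closed polygonal circles, so $E_b=V_b$.

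\textbf{Step 4: combine.} From Step 2,
\[
\omega(P)+\kappa(\partial P)=2\pi V_i+\pi V_b-\pi F.
\]
Step 3 gives $F=2E-E_b-2F$, so $\pi F = 2\pi E-\pi V_b-2\pi F$. Substituting, the right-hand side becomes
\[
2\pi V_i+2\pi V_b-2\pi E+2\pi F=2\pi\chi(P),
\]
which is the claimed identity.
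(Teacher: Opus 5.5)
Steps 2--4 are correct. The bookkeeping $\pi F=2\pi V_i+\pi V_b-\omega(P)-\kappa(\partial P)$ together with $3F=2E-V_b$ gives exactly $2\pi(V-E+F)$. Triangulating into flat triangles and counting angles against Euler's formula is also the standard argument behind this result. The paper gives no proof of its own; it only cites Reshetnyak (Theorems 5.3.1--5.3.2).

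The gap is Step 1: it is the only non-elementary input, and you assert its conclusion rather than prove it. Your concrete suggestion of taking a fine topological triangulation and straightening its edges fails as stated, for two reasons.
\begin{enumerate}
\item Fineness in diameter does not stop straightened edges from crossing, or triangles from degenerating or reversing orientation.
\item Fineness does not control the angles at a cone point. In a model cone $Q(\theta,h)$ with $\theta>2\pi$, the shortest path between two points at angular distance at least $\pi$ runs through the tip. So an edge opposite the tip, in a triangle whose angle there is $\geq\pi$, gets straightened through a vertex.
\end{enumerate}
The ``Delaunay-like net'' is only a name. It also needs real work at boundary vertices with $\theta(p)>\pi$.

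There are two clean ways to close this.
\begin{itemize}
\item Quote the existence of a triangulation whose closed faces are isometric to Euclidean triangles. Cone points are then automatically vertices, because face interiors are flat and points interior to edges have angle $\pi+\pi$. This is the Reshetnyak result the paper itself invokes (his Theorem 5.2.2) for surfaces without boundary; you need its version with boundary.
\item Bypass triangulations altogether. Since there are finitely many cone points, remove small metric balls $B(p,r)$ around them and apply the smooth Gauss--Bonnet theorem to the flat remainder $P'$.
\begin{itemize}
\item Around an interior cone point, the circle of length $\theta(p)r$ has geodesic curvature $-1/r$ as seen from $P'$, so it contributes $-\theta(p)$. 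Each such removal also lowers $\chi$ by one.
\item Around a boundary vertex, the arc contributes $-\theta(p)$, and its two right-angle corners contribute turning $\pi/2$ each, giving $\kappa(p)$ in total.
\item Straight boundary pieces contribute nothing.
\end{itemize}
Summing yields $\omega(P)+\kappa(\partial P)=2\pi\chi(P)$ directly.
\end{itemize}
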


\begin{proof}
See \cite[Theorems 5.3.1 and 5.3.2]{ReshetnyakTwoDimManifolds}.
\end{proof}

\begin{proposition} Every polyhedral surface $(P,d)$ without boundary is a BIC surface without cusps.
\end{proposition}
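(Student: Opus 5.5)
The plan is to verify the three conditions in the definition of a BIC surface, and then to show separately that there are no cusps. The first two conditions are immediate. $P$ is a smooth connected surface, here without boundary. By Definition \ref{polyc}(i), $d$ is a length metric inducing the original topology.

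The substantive point is the local bound on the excess. I would argue via Reshetnyak--Huber (\thref{thm:ReschetnyakHuber}(i)), realizing $d$ locally as a subharmonic distance.

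1. Fix a point $p$ with total angle $\theta=\theta(p)$. Its conical neighborhood $Q(\theta,h)$ is isometric to the disc $\{|z|<h'\}\subset\mathbb{C}$ with the metric $|z|^{2\beta}|\dd z|^2$ multiplied by a constant, where $\beta=\theta/(2\pi)-1>-1$. This comes from the map $z\mapsto z^{\beta+1}/(\beta+1)$ in polar coordinates. The map is a diffeomorphism away from the tip, so the smooth structure can be taken compatible with the given one; the canonic LBL structure of Remark \ref{espo} is what justifies this.
2. On that chart, $d$ coincides on small balls with $d_{h_0,u}$, where $h_0$ is Euclidean and $u=\beta\log|z|$. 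We have $\underline{\Delta}u=2\pi\beta\,\delta_0$, a Radon measure, and $\beta>-1$ ensures there are no points at infinite distance.
3. So each point has a neighborhood on which $d$ agrees locally with a BIC metric. The excess condition only involves small simple triangles. Any compact set $K$ is covered by finitely many such neighborhoods, and I would subdivide triangles so that each lies in one chart, which yields the bound $C(K)$.

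The step I expect to be the main obstacle is this localization: a triangle in $K$ need not lie in a single chart, and minimizing geodesics in $P$ could leave the chart. To handle it, I would shrink the neighborhoods to radius $h/3$ so that geodesics between points of a chart remain inside the $h$-cone. I would then use the additivity of excess under subdivision, which follows from angle sums along interior edges: they contribute $\pi$ at flat points and $\theta$ at vertices. This additivity bounds $\delta(\mathcal T)$ by $\sum_{p\in V\cap K'}|2\pi-\theta(p)|$ for a slightly larger compact $K'$.

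A more elementary route is also available. A triangle avoiding vertices is flat and has zero excess. For a triangle containing vertices, the Gauss--Bonnet theorem for polyhedral surfaces, applied to the triangle as a compact polyhedral disc, gives $\delta(T)=\omega(T)$. This again bounds the total excess over $K$ by $\sum_{p\in V\cap K'}|\omega(p)|$. The sum is finite because vertices are isolated and hence finite in number on compact sets.

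For no cusps, the curvature measure of $(P,d)$ is $\omega=\sum_{p\in V}(2\pi-\theta(p))\delta_p$, obtained from Reshetnyak--Huber or from the Gauss--Bonnet computation above. Hence $\omega(\{p\})=2\pi-\theta(p)<2\pi$ because $\theta(p)>0$, so no point is a cusp.
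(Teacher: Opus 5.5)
Your proposal is correct. Your ``more elementary route'' is essentially the paper's proof. The paper covers a compact disc $K$ by finitely many conical model neighbourhoods, so $K$ contains finitely many surface vertices. It then applies the polyhedral Gauss--Bonnet theorem to each simple triangle to get $\delta(T_i)=\omega(T_i)$, and bounds $\sum_i\delta(T_i)$ by $\omega^+(K)$.

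Your main route via Reshetnyak--Huber is genuinely different, but heavier and in the end redundant. Reshetnyak--Huber only gives the BIC property inside each cone chart. The definition, however, demands a bound for all simple triangles in $K$, not only small ones, so to globalize you need two things:
\begin{itemize}
\item a subdivision of each simple triangle into small simple triangles each lying in one chart, which is a nontrivial statement in its own right;
\item the additivity formula for excess under subdivision.
\end{itemize}
That additivity formula is exactly the combinatorial Gauss--Bonnet identity. Once you have it, the excess of a small triangle in a cone can be read off directly: it is $2\pi-\theta$ if the tip is interior and $0$ if the triangle avoids the tip. So the representation $u=\beta\log|z|$ contributes nothing to the excess bound. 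Also, you do not need the cone coordinate to be smooth for the given structure, since the BIC conditions are purely metric and topological. The canonic LBL structure would not justify this anyway, because $z$ is not locally bi-Lipschitz to the cone metric at the tip when $\beta\neq 0$.

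In either route, state the Gauss--Bonnet step as an inequality. For a simple triangle, upper angles are at most the actual interior angles, since they are capped at $\pi$. Moreover, a shortest side may pass through a vertex with $\theta>2\pi$, where the boundary turn $\pi-\theta_{\mathrm{side}}$ is negative. What you actually get is $\delta(T)\le\omega(T^\circ)$. Working with interiors is also what makes the summation correct: non-overlapping triangles may share boundary points, including vertices, but have disjoint interiors. This gives $\sum_i\delta(T_i)\le\sum_{p\in V\cap K}|\omega(p)|$.

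Your explicit no-cusp argument, $\omega(\{p\})=2\pi-\theta(p)<2\pi$ since $\theta(p)>0$, is more complete than the paper's proof. The paper leaves this point to the remark after the proof that identifies the polyhedral and BIC curvature measures.
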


\begin{proof}
Let $K\subset P$ be a neighborhood homeomorphic to a disc.
Then $K$ can be covered by a finite number of conical model neighborhoods.
Thus $K$ contains a finite number of vertices.
Let $(T_i)_{1\leq i \leq n}$ be a finite family of simple non-overlapping geodesic triangles in $K$.
By Gauss-Bonnet for polyhedral surfaces,
\[
\omega(T_i) = \delta(T_i).
\]
As $(T_i)$ is non-overlapping, every point is contained in at most one triangle.
Thus we have
\[
\sum_{i=1}^n \delta(T_i) \leq \sum_{i=1}^n \omega^+(T_i) \leq \omega^+(K)
\]
However, as the curvature is concentrated on surface vertices, this shows that $\omega^+(K)$ is upper bounded by a constant depending only on $K$.
\end{proof}

It is clear that the polyhedral curvature measure $\omega$ defined in this paragraph coincides with the curvature measure defined for a general BIC surface: Indeed, it suffices to look at the curvatures in conical neighborhoods to see that both are concentrated on the tip of the cone.\vspace{2mm}

We now make contact with abstract locally Lipschitz polyhedra as defined in the appendix of this paper:

\begin{theorem}
\th\label{bicSurf:th:polyhedronTriang}
Every polyhedral surface $(P,d)$ without boundary admits a locally Lipschitz triangulation $T=(K,\vartheta)$ such that $K$ is homogeneous of dimension $2$ and for all $s\in \mathcal{S}^{(2)}(P,T)$ there exists an affine coordinate system $\phi_s$ for $s$ such that the composition
$$
\vartheta \circ \phi^{-1}_s:\R^2\supset\phi_s(\overline{s})\longrightarrow \vartheta(\overline{s})\subset P
$$
is a metric isometry. 
\end{theorem}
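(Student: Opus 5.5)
The plan is to build a geodesic triangulation of $P$ whose $2$-simplices are flat Euclidean triangles, and then check that it fits the appendix's notion of a locally Lipschitz triangulation. For this I will use the classical structure theorem for polyhedral surfaces in \cite{ReshetnyakTwoDimManifolds} (or \cite{AleksandrovIntrinsicGeometry}): $(P,d)$ admits a locally finite triangulation into geodesic triangles, each isometric to a nondegenerate Euclidean triangle, with every surface vertex among the vertices. If I prove this myself, I proceed as follows.

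\textbf{Step 1 (local flat cells).} The set $V$ of surface vertices is discrete. For each point choose a conical model neighborhood $Q(\theta,h)$. For $p\notin V$ (so $\theta=2\pi$) it is a flat disc. For $p\in V$, cut $Q(\theta,h)$ along $N\geq 3$ radial geodesics with consecutive angles $<\pi$. Each resulting sector $A(\theta/N,h)$ with $\theta/N<\pi$ embeds isometrically into $\R^2$, so a smaller truncation is a union of flat Euclidean triangles with apex $p$.

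\textbf{Step 2 (global triangulation).} Using $\sigma$-compactness, take a locally finite cover of $P$ by such cells, where the cells around vertices are pairwise disjoint and the complement is covered by small flat discs avoiding $V$. Refine this cover into a locally finite geodesic triangulation. Since geodesic segments in flat regions are straight lines in the charts, the overlaps of convex polygons are convex polygons, and these can be triangulated by straight segments. Inductively over an exhaustion by compacts, the common refinement is a locally finite simplicial complex $K$, every $2$-simplex of which is a flat triangle.

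\textbf{Step 3 (homogeneity and Lipschitz property).} Since $P$ is a $2$-manifold without boundary, every edge lies in exactly two triangles and every vertex lies in some triangle, so $K$ is homogeneous of dimension $2$. Define $\vartheta$ on each closed simplex $\overline{s}$ as the composition of the affine identification of the standard simplex with the Euclidean triangle $\phi_s(\overline{s})$ and the isometry onto $\vartheta(\overline{s})$. This gives the affine coordinate systems $\phi_s$, and by construction $\vartheta\circ\phi_s^{-1}$ is an isometry. On each closed simplex $\vartheta$ is bi-Lipschitz with respect to the Euclidean metric of $K$. Combining this with local finiteness and the length structure of $d$ (a path crossing finitely many simplices) shows that $\vartheta$ is locally bi-Lipschitz, which is the locally Lipschitz condition of the appendix.

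\textbf{Main obstacle.} The hard part is Step 2: making the refinement genuinely locally finite and simplicial, with the pieces compatible on shared edges, on a possibly noncompact $P$. I would handle this by an exhaustion $K_1\subset K_2\subset\cdots$ and extend the triangulation one compact at a time, leaving the part already built untouched. If the appendix's definition demands more, for example a compatibility of the Lipschitz constants with the simplicial metric, I would check it locally near each vertex, where only finitely many simplices meet.
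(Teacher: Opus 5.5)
Your proposal is correct and follows the paper's route: the paper's proof consists solely of citing \cite[Theorem 5.2.2]{ReshetnyakTwoDimManifolds}, which is the classical triangulation of a polyhedral surface into flat Euclidean triangles that you invoke. Your Step 3 (homogeneity of $K$ and the local Lipschitz property of $\vartheta$) spells out routine checks the paper leaves implicit, and your self-contained Steps 1--2 are unnecessary once that citation is made.
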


\begin{proof}
See \cite[Theorem 5.2.2]{ReshetnyakTwoDimManifolds}.
\end{proof}

In particular, every polyhedral surface without boundary (with its canonic LBL structure) is a locally Lipschitz polyhedron that is homogeneous of dimension $2$.

\begin{theorem}\label{letsa} Every polyhedral surface $(P,d)$ without boundary admits a polyhedrally smooth Riemannian metric $g$ (cf. \th\ref{riemPolyhedra:th:EqTwoDefSobolev}) such that $d=d_g$.
\end{theorem}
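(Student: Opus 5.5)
Using \th\ref{bicSurf:th:polyhedronTriang}, I fix a locally Lipschitz triangulation $T=(K,\vartheta)$ of $P$, homogeneous of dimension $2$, such that for each $2$-simplex $s$ there is an affine coordinate system $\phi_s$ with $\vartheta\circ\phi_s^{-1}:\phi_s(\overline{s})\to\vartheta(\overline{s})$ a metric isometry onto its image.

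\textbf{Step 1: definition of $g$.} On each closed $2$-simplex $s$ I set $g_s:=(\phi_s)^*g_{\mathrm{eucl}}$, the pullback of the Euclidean metric through the affine chart. Then $g_s$ is a constant-coefficient (hence smooth up to the boundary) Riemannian metric on $\overline{s}$.

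\textbf{Step 2: compatibility and polyhedral smoothness.} I check the compatibility required in \th\ref{riemPolyhedra:th:EqTwoDefSobolev}. If $s,s'$ share an edge $e$, then $\vartheta\circ\phi_s^{-1}$ and $\vartheta\circ\phi_{s'}^{-1}$ are both isometries for $d$ restricted to $\vartheta(e)$. The transition $\phi_{s'}\circ\phi_s^{-1}$ restricted to $\phi_s(e)$ is therefore an affine isometry of segments. Consequently $g_s$ and $g_{s'}$ induce the same length element along $e$, which is the gluing condition for polyhedrally smooth metrics. Ellipticity, meaning uniform comparability with a background Euclidean metric in the affine coordinates of each simplex, holds trivially because each $g_s$ is constant.

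\textbf{Step 3: $d_g\le d$.} The distance $d_g$ is the infimum of $g$-lengths of Lipschitz curves. Any $d$-rectifiable curve can be approximated by curves that are piecewise contained in closed simplices: use local finiteness of the triangulation, and split the curve at its crossing times with the $1$-skeleton. On each piece, the $d$-length equals the Euclidean length in $\phi_s$, because the charts are metric isometries and lengths are intrinsic. Hence $L_g=L_d$ for such curves.

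\textbf{Step 4: $d\le d_g$.} Conversely, any Lipschitz curve has $g$-length equal to the sum of its Euclidean lengths in the simplices, and this sum dominates its $d$-length. Since $d$ is a length metric (Definition \ref{polyc}(i)), the two inequalities give $d=d_g$.

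\textbf{Main obstacle.} The delicate step is the curve decomposition in Steps 3–4. A Lipschitz curve may cross edges or pass through vertices infinitely often, so it need not split into finitely many simplex pieces. I would handle this in two ways:
\begin{itemize}
\item By the additivity and lower semicontinuity of length, writing the set of parameter times in the open $2$-simplices as a countable union of intervals, and noting that the time spent on the $1$-skeleton contributes equally to both lengths because each edge is isometric to a segment in both structures.
\item Alternatively, by invoking the appendix characterization of $d_g$ on Riemannian polyhedra via Eells–Fuglede, which identifies $d_g$ with the intrinsic length metric obtained by gluing the simplices.
\end{itemize}
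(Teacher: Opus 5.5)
You use the same metric as the paper, $g^T_{s,ij}=\delta_{ij}$ in the isometric affine coordinates $\phi_s$, but you take a different route to $d=d_g$. The paper writes $d=d_{h,u}$ via Reshetnyak--Huber. Since $\underline{\Delta}_h u=-\mathbb{K}_h\mu_h$ off the vertex set $V$, $u$ is smooth on $P\setminus V$ and $(\phi_s\circ\vartheta^{-1})^*h_{\mathrm{Eucl}}=\ee^{2u}h$ on each $\vartheta(s^\circ)$. The conformal length $\int_0^1 \ee^{u(\gamma(t))}|\gamma'(t)|_h\dd t$ and $\mathscr{L}_g(\gamma)$ then have the same integrand on every open $2$-simplex, and open edges are just part of the smooth region. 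Your synthetic comparison avoids uniformization and elliptic regularity, which is a real simplification. However, it has a gap exactly at the point you flag, because it never uses the definition of $d_g$ given in the appendix.

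In that definition, $\mathscr{L}_g(\gamma)$ integrates only over $\gamma^{-1}(\vartheta(s^\circ))$ for $2$-simplices $s$. Time spent in the $1$-skeleton therefore contributes nothing, not the Euclidean edge length. Moreover $d_g=\sup_Z d^Z_g$, where $d^Z_g$ is an infimum over the restricted class $\Omega(x,y;Z)$ only. What your Steps 3--4 actually compare with $d$ is a glued length (edges counted) over unrestricted curves, and that is not $d_g$. Concretely, your claim that skeleton time ``contributes equally to both lengths'' is false. Step 4 as stated also fails, since a curve running inside an edge has zero $\mathscr{L}_g$-length there. Step 3 never addresses the supremum over $Z$. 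To get $d^Z_g(x,y)\le d(x,y)$ for \emph{every} $\mu_g$-null $Z$ you need nearly optimal competitors that spend null time in $Z$. No fixed curve works for all $Z$: take $Z$ to be its own image, which is $\mu_g$-null.

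Both points can be repaired within your approach.

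\textbf{The inequality $d\le d_g$.} Take $Z\supseteq\vartheta(1\text{-skeleton})$. For $\gamma\in\Omega(x,y;Z)$, the metric speed of $\gamma$ for $d$ equals the Euclidean speed of $\phi_s\circ\vartheta^{-1}\circ\gamma$ a.e. on $\gamma^{-1}(\vartheta(s^\circ))$. Hence $\mathscr{L}_g(\gamma)=L_d(\gamma)\ge d(x,y)$.

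\textbf{The inequality $d_g\le d$.} Here the missing idea is a Fubini perturbation. Fix $a,b\in\overline{s}$ and a null set $Z$. For a.e. $b'$ in a small disc around $\phi_s(b)$ inside $\phi_s(s^\circ)$, the segments $[\phi_s(a),b']$ and $[b',\phi_s(b)]$ spend null time in $\phi_s(\vartheta^{-1}(Z)\cap\overline{s})$ and in $\partial\phi_s(\overline{s})$; this follows from polar coordinates about $\phi_s(a)$ and about $\phi_s(b)$. This gives $d_g(a,b)\le|\phi_s(a)-\phi_s(b)|=d(a,b)$.

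This local bound must then be chained, in one of two ways:
\begin{enumerate}
\item use that $d$-shortest paths are polygonal with finitely many pieces on compact sets, which can be read off from the conical model neighbourhoods; or
\item compare metric speeds, $|\dot\gamma|_{d_g}\le|\dot\gamma|_d$ a.e. along $d$-Lipschitz curves. This uses that points of open edges have total angle $2\pi$, hence flat neighbourhoods, where a short straight segment splits into at most two simplexwise pieces. It also uses that both speeds vanish a.e. on $\gamma^{-1}(v)$ for each vertex $v$.
\end{enumerate}

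Your second proposed fix is not available. The appendix only records that $d_g$ is a length distance inducing the LBL structure, not that it equals the glued intrinsic metric. (The paper's proof is equally brief here: it asserts that a Lipschitz curve spends null time in $P\setminus N$, which fails for a curve inside an edge. So it needs the same choice of $Z$ and a perturbation argument as well.)
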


\begin{proof}Pick $T=(K,\vartheta)$ as in Theorem \ref{bicSurf:th:polyhedronTriang} and $h$ and $u$ such that $d=d_{h,u}$. Let $V$ denote the countable set of surface vertices of $(P,d)$. Given any $s\in \mathcal{S}^{(2)}(P,T)$ we have $\vartheta(s^\circ)\subset P\setminus V$. On the open set $P\setminus V$ we have
$$
 \underline{\Delta}_h u=-\mathbb{K}_h \mu_h, 
$$
and so $u$ is smooth on this set by local elliptic regularity, and for any affine coordinate system $\phi_s$ as in Theorem \ref{bicSurf:th:polyhedronTriang}, the map
$$
 \phi_s\circ \vartheta^{-1}: \vartheta(s^\circ)\longrightarrow \phi_s(s^\circ)
$$
becomes a smooth chart such that 
\begin{align}\label{dio}
(\phi_s\circ \vartheta^{-1})^*h_{\mathrm{Eucl}}=\ee^{2u}h,
\end{align}
It follows that $g^T_{s,ij}(x):=\delta_{ij}$ does the job: indeed, if $\gamma:[0,1]\to P$ is a Lipschitz curve (cf. Remark \ref{espo}), then for its polyhedral $g$-length (cf. Appendix) one has
$$
\mathscr{L}_g(\gamma) = \sum_{s\in S^{(m)}(P,T)} \bigintsss_{\gamma^{-1}(\vartheta(s^\circ))}
\sqrt{\sum_{1\leq j\leq m}   \dot\gamma_{\phi_s}^j\dot\gamma_{\phi_s}^j }\dd t,
$$
where
$$
\gamma_{\phi_s}:=\phi_s\circ \vartheta^{-1} \circ \gamma.
$$
On the other hand, as $\vartheta$ is locally Lipschitz, with
$$
N:=\bigsqcup_{s\in S^{(m)}(P,T)}\vartheta(s^\circ)
$$
one has $\mu_h\left(P\setminus N\right)=0$, and so as $\gamma$ is Lipschitz, one has that $\gamma^{-1}(P\setminus N)\subset [0,1]$ has Lebesgue measure $0$. It follows that 
$$
\int_0^1 \ee^{u(\gamma(t))}|\gamma'(t)|_h\dd t=\sum_{s\in S^{(m)}(P,T)} \bigintsss_{\gamma^{-1}(\vartheta(s^\circ))} \ee^{u(\gamma(t))}|\gamma'(t)|_h\dd t,
$$
which in view of (\ref{dio}) is $=\mathscr{L}_g(\gamma)$, and finishes the proof.
\end{proof}

\begin{remark} With similar arguments one finds that in the above situation $\mu_{h,u}$ is equal to the polyhedral Riemannian volume measure $\mu_g$ (cf. appendix). In particular, it follows that complete polyhedral surfaces without boundary together with their $2$-dimensional Hausdorff measures are infinitesimally Hilbertian, locally doubling and satisfy local Poincaré estimates (cf. Remark \ref{rypo}).
\end{remark}

\section{Functional inequalities and heat kernels on complete BIC surfaces without cusps}\label{section:heat}

Throughout this section, let $(S,d)$ be a complete BIC surface without cusps, with $\mu$ denoting its $2$-dimensional Hausdorff measure. \vspace{3mm}

The following result will ultimately allow us to transfer analytic results from Riemannian polyhedra to BIC surfaces:

\begin{lemma}
\th\label{lem:neighborEmbed}
For any point $x\in S$ and any compact geodesically convex neighborhood $P\subset S$ of $x$ homeomorphic to a disc, there exists a closed BIC surface $(Y,e)$ without cusps such that
\begin{itemize}
\item $Y$ is diffeomorphic to $\mathbb{S}^2$,
\item $(P, d)$ is isometrically embedded in $(Y,e)$,
\item there exists a sequence $(e_i)$ of polyhedral distances on $Y$ converging to $e$ in the Lipschitz sense,
\item $\omega_i^\pm\rightharpoonup\omega^\pm$, where $\omega_i$ (resp. $\omega$) is the curvature measure of $e_i$ (resp. $e$).
\end{itemize}
\end{lemma}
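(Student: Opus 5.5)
The plan is to build $(Y,e)$ by gluing to $P$ a second compact disc along $\partial P$, so that the result is a topological sphere. Then we invoke the known approximation theory for closed BIC surfaces: Burago's bi-Lipschitz approximation \cite{buragoBiLipschitzEquivalentAleksandrovSurfaces2005}, together with Alexandrov–Zalgaller/Reshetnyak convergence of curvature measures.

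\textbf{Step 1 (boundary regularity).} Since $P$ is compact, geodesically convex and homeomorphic to a disc, its boundary $\partial P$ is a closed curve of bounded turn. This follows from Alexandrov's theory of convex domains in BIC surfaces: the turn from the inside is a finite signed measure $\kappa_P$ on $\partial P$. Moreover, geodesic convexity gives $\kappa_P\geq 0$ away from atoms, and atoms of $\kappa_P$ are $<\pi$.

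\textbf{Step 2 (the cap).} We take a second disc $D$, for instance a flat cone or spherical cap with a polyhedral or smooth structure, whose boundary curve has the same length as $\partial P$. We choose its boundary turn so that gluing along an arclength parametrisation produces total turn $\kappa_P+\kappa_D$ that is locally bounded. Alexandrov's gluing theorem (\cite[Ch.~VIII]{AleksandrovIntrinsicGeometry}, Reshetnyak) then says the glued length space $Y=P\cup_{\partial P}D$ is a BIC surface. Its curvature measure is $\omega|_{P^\circ}+\omega_D|_{D^\circ}+(\kappa_P+\kappa_D)|_{\partial P}$.

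To avoid cusps, we need the atoms of the glued curvature to stay below $2\pi$. On $\partial P$ the atoms are $\kappa_P(\{p\})+\kappa_D(\{p\})<\pi+\pi$, since $D$ can be chosen with boundary turn atoms $<\pi$, e.g. with $D$ locally convex. Topologically $Y\cong\mathbb{S}^2$, and by smoothing we may take $Y$ diffeomorphic to $\mathbb{S}^2$.

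\textbf{Step 3 (isometric embedding).} The inclusion $P\hookrightarrow Y$ must be isometric, not merely a local isometry. A path leaving $P$ and returning passes through $\partial P$ at two points $a,b$. By convexity of $P$ in $S$ we know $d(a,b)$ is realised inside $P$, but $D$ could in principle provide shortcuts. To exclude them we choose $D$ "fat": we glue in a collar cylinder $\partial P\times[0,L]$ with product-type metric before capping. Then any detour through $D$ has length at least the length of its projection onto $\partial P$, which is at least the intrinsic distance in $P$ by convexity. I expect this to be the main obstacle. Making the cap simultaneously non-shortcutting and of bounded turn along the seam requires care, and the cleanest route is probably to use the Reshetnyak–Huber representation (\th\ref{thm:ReschetnyakHuber}) near $\partial P$ to write the collar conformally.

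\textbf{Step 4 (approximation).} $(Y,e)$ is a compact BIC surface without cusps. Burago's theorem \cite{buragoBiLipschitzEquivalentAleksandrovSurfaces2005} supplies polyhedral distances $e_i$ on $Y$ with $(1-\varepsilon_i)e\le e_i\le(1+\varepsilon_i)e$, $\varepsilon_i\to0$. Compactness of $Y$ and the construction via triangulations with fine meshes give weak convergence $\omega_i^\pm\rightharpoonup\omega^\pm$: this is the standard convergence theorem for approximating polyhedra of Alexandrov–Zalgaller, where convergence of $|\omega_i|(Y)$ to $|\omega|(Y)$ upgrades convergence of $\omega_i$ to convergence of the positive and negative parts separately. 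If Burago's construction does not directly yield this, we pass to a refinement in which the polyhedra are inscribed in a fine triangulation of $Y$, for which both properties are classical.
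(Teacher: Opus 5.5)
Your proposal follows the paper's route: cap $P$ off along $\partial P$ to get a closed BIC sphere via the Alexandrov--Reshetnyak gluing theorem, rule out cusps by positivity of the angles along the seam, and take the Lipschitz polyhedral approximation with $\omega_i^\pm\rightharpoonup\omega^\pm$ from Burago's Lemma 6. The one real difference is the cap: the paper uses a round hemisphere $H$ whose equator has length $L(\partial P)$, and this already settles your Step 3 without any collar. The intrinsic distance in $H$ between two equator points equals the length of the shorter equatorial arc, which is the $d$-length of an arc of $\partial P$ joining the same two points and hence at least their $d$-distance, so excursions into $H$ never shorten paths between points of $P$.

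One correction to Step 4: drop the fallback. Lipschitz convergence (as opposed to uniform convergence) of inscribed polyhedra is not classical; it is exactly the content of the Burago result you are citing.
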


Note that such a neighborhood $P$ of $x$ always exists by \cite[Theorem 1 p.58]{AleksandrovIntrinsicGeometry}, \cite[Theorem 4.122]{fillastreSubharmonic2023} and \cite[Theorem 8.1.8]{ReshetnyakTwoDimManifolds}.

\begin{proof}
As $P$ is homeomorphic to a disc, in the sequel, we can consider that $P$ is the closed unit disc of $\mathbb{R}^2$
endowed with the BIC distance $d$ by isometry.
We define $Y$ as the gluing along the boundaries of a copy of $P$ and a hemisphere $H\subset\mathbb{S}^2$ with the same radius as the boundary of $P$, endowed with the length distance $e$ induced by the respective distances on $H$ and $P$.
By \cite[Theorem 8.3.1]{ReshetnyakTwoDimManifolds}, $Y$ is a closed BIC surface and, moreover, since $P$ is the disc,
$Y$ is diffeomorphic to the sphere $\mathbb{S}^2$.
The surface $Y$ does not contain any cusps as the total angle is everywhere positive.
We can also justify it using excision and pasting from \cite[Chapter VI]{AleksandrovIntrinsicGeometry}.
The existence of the approximating sequence by polyhedral distances is guaranteed by Burago's theorem \cite[Lemma 6]{buragoBiLipschitzEquivalentAleksandrovSurfaces2005}.
\end{proof}

\begin{theorem}\th\label{thm:AnalysisOnBIC} \emph{(i)} The metric measure space $(S,d,\mu)$ is infinitesimally Hilbertian.\\
\emph{(ii)} One has $d_{\Ch} = d$.\\
\emph{(iii)} For any point $x\in S$ there exists an open neighborhood $U$ of $x$ such that one has a weak Poincaré inequality on $U$, i.e.\ there exists $C,k>0$, depending on $U$, such that for any $f\in W^{1,2}(S,d,\mu)$ and any $z\in U$ with $B(z,kr)\subset U$ one has 
\[
\int_{B(z, r)}|f-f_B|^2\dd\mu \leq Cr^2 \int_{B(z,kr)} |Df|^2\dd\mu,\quad \text{where $u_B := \fint_{B(z,r)}f\dd\mu$}.
\]
\emph{(iv)} For any point $x\in S$ there exists an open neighborhood $U$ of $x$ such that $\mu$ is doubling on $U$, i.e. there exists $c>0$, depending on $U$, such that for any $z\in U$ and every $r>0$ with $B(z,2r)\subset U$ one has
\[
\mu(B(z, 2r)) \leq c \mu(B(z,r)).
\]
\end{theorem}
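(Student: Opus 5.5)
The plan is to transfer the known results for polyhedral surfaces to $(S,d,\mu)$ in three moves: localize, compare Lipschitz-close polyhedral distances, and pass to the limit.

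\textbf{Localization.} Fix $x\in S$ and pick a compact geodesically convex disc neighbourhood $P$ of $x$. Lemma \ref{lem:neighborEmbed} embeds $P$ isometrically into a closed BIC surface $(Y,e)$. It also gives polyhedral distances $e_i\to e$ in the Lipschitz sense. By Theorem \ref{letsa}, each $(Y,e_i)$ is a polyhedral Riemannian manifold $(Y,g_i)$ with $d_{g_i}=e_i$. By the remark following Theorem \ref{letsa}, each $(Y,e_i,\mu_{g_i})$ is infinitesimally Hilbertian, doubling and satisfies a Poincaré inequality. Since $Y$ is compact, the constants are uniform over all of $Y$.

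\textbf{Uniform constants for the approximants.} This is the key quantitative step. I need the doubling and Poincaré constants of $(Y,e_i,\mu_{g_i})$ to be independent of $i$ for large $i$. Fix $\varepsilon$ and take $i$ large, so that $(1+\varepsilon)^{-1}e\le e_i\le(1+\varepsilon)e$.

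Both $\mu_{g_i}$ and the Hausdorff measure $\mathcal H^2_e$ are $2$-dimensional Hausdorff measures of bi-Lipschitz equivalent distances. Hence
\[
(1+\varepsilon)^{-2}\mathcal H^2_e\le\mu_{g_i}\le(1+\varepsilon)^2\mathcal H^2_e .
\]
Now apply \eqref{ecpoop}. It says that $W^{1,2}$ and $|Df|$ for $(e,\mathcal H^2_e)$ are comparable to those for $(e_i,\mu_{g_i})$, up to factors $(1+\varepsilon)^{\pm1}$. Doubling and Poincaré inequalities survive such comparisons, with constants changed by controlled factors and balls rescaled by $(1+\varepsilon)$ (this is where the dilation factor $k$ enters). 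So it suffices to establish them for a single polyhedral metric $e_N$, which gives (iii) and (iv) on $Y$. These then pass to $S$ on a neighbourhood $U\subset P^\circ$, chosen small enough that balls $B(z,kr)\subset U$ in $S$ coincide with the corresponding balls in $Y$.

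For balls inside $U$ I need both the distance and the measure of $S$ and $Y$ to agree. The distances agree because of isometric embedding together with geodesic convexity of $P$: geodesics between points of $P$ stay in $P$. The measures agree because both are the Hausdorff measure of that same distance.

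\textbf{Infinitesimal Hilbertianity (i).} Bi-Lipschitz comparison alone does not preserve quadraticity, so I must pass to the limit $i\to\infty$. For $f_1,f_2\in W^{1,2}$, the parallelogram identity holds for $|D\cdot|_{e_i}$, since the polyhedral spaces are infinitesimally Hilbertian. Pinching gives $(1+\varepsilon_i)^{-1}|Df|_e\le|Df|_{e_i}\le(1+\varepsilon_i)|Df|_e$ a.e. with $\varepsilon_i\to0$. Letting $i\to\infty$ in the parallelogram identity then yields it for $|D\cdot|_e$ a.e. on $Y$.

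To get the statement on $S$, I use locality. The minimal weak upper gradient on $P^\circ$ is the same whether computed in $S$ or in $Y$. To justify this, take cutoffs $\chi\in\LIP_c(P^\circ)$; because $P$ is convex, the Lipschitz slopes of functions supported near $x$ agree in both spaces. Covering $S$ by such neighbourhoods and using the a.e. pointwise parallelogram identity then gives (i).

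\textbf{Carathéodory distance (ii).} The inequality $d_{\Ch}\ge d$ is immediate, by testing with $f=d(x,\cdot)$, which is $1$-Lipschitz and so has $|Df|\le\lip f\le1$.

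For $d_{\Ch}\le d$, I must show that a continuous $f$ with $|Df|\le1$ a.e. is $1$-Lipschitz. On polyhedral Riemannian surfaces this Sobolev-to-Lipschitz property is available through the Riemannian polyhedra theory in the appendix. Locally, $f$ is then $(1+\varepsilon_i)^2$-Lipschitz for $e$, and letting $i\to\infty$ makes it locally $1$-Lipschitz. Since $d$ is a length distance, locally $1$-Lipschitz implies globally $1$-Lipschitz.

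\textbf{Main obstacle.} I expect the localization and identification of Sobolev objects on $P^\circ\subset S$ with those on $Y$ to be the most delicate step. The Cheeger energy is defined globally through relaxation, so locality has to be argued carefully via cutoffs.
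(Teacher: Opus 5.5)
Your proposal is correct and follows essentially the same route as the paper. You localize by embedding a convex disc $P$ into a closed BIC sphere $(Y,e)$ and use Burago's polyhedral distances $e_i$ that are Lipschitz-close to $e$. You then obtain (i) and (ii) by letting the bi-Lipschitz constant tend to $1$, in the a.e.\ parallelogram inequality and in the polyhedral Sobolev-to-Lipschitz property respectively, concluding (ii) via the length structure of $d$. Finally, you obtain (iii) and (iv) by transferring the polyhedral estimates from a single approximant.

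As you end up noting yourself, no uniformity of the polyhedral constants in $i$ is needed for (iii) and (iv). Your cutoff argument, showing that minimal weak upper gradients of functions supported in $P^\circ$ agree in $S$ and in $Y$, is the explicit counterpart of the paper's appeal to Gigli's localization result.
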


\begin{proof}
(i)
Let $f_1,f_2\in W^{1,2}(S,d,\mu)$.
Let $x\in S$ and let $U$ be a neighborhood of $x$ contained in some $P$ chosen as in \thref{lem:neighborEmbed}.
We consider a smooth cut-off function $\varphi : P\to [0,1]$ such that $\varphi=1$ on $U$ and
$$
d(\supp(\varphi), \partial P)>0.
$$
Let $(Y,e)$ be an embedding space of $P$ as in \thref{lem:neighborEmbed}. As 
$$
d(\supp(\varphi f_j), \partial P)>0,
$$
by \cite[Proposition 2.6]{gigliDifferentialStructureMetric2015} one has $\varphi f_j\in W^{1,2}(Y, e,\mu_e)$, where $\mu_e$ denotes the $2$-dimensional Hausdorff measure on $(Y,e)$. By locality of the minimal weak upper gradient
\begin{align}\label{edqqa}
|D(\varphi f_j)| = |D f_j|,\quad |D(\varphi (f_1\pm f_2))| = |D (f_1\pm f_2)|\quad\text{ $\mu$-a.e. on $U$. }
\end{align}
Pick a sequence of polyhedral distances $(e_i)$ on $Y$ as in \thref{lem:neighborEmbed}. By Remark \ref{rypo} from the appendix we know that $(Y, e_i,\mu_{e_i})$ is infinitesimally Hilbertian.
So, for all $0<\varepsilon<1$, using bi-Lipschitz equivalence, (\ref{ecpoop}) and the locality (\ref{edqqa}) of the minimal weak upper gradient, we see that inequalities
\[
\left(\frac{1-\varepsilon}{1+\varepsilon}\right)^2 \big( 2|Df_1|^2 + 2|Df_2|^2 \big)
\leq |D(f_1+f_2)|^2 + |D(f_1-f_2)|^2
\leq \left(\frac{1+\varepsilon}{1-\varepsilon}\right)^2 \big( 2|Df_1|^2 + 2|Df_2|^2 \big)
\]
hold $\mu$-a.e. on $U$, and the proof is completed by sending $\varepsilon\to 0^+$.\vspace{1mm}

(ii) Fix $x,y\in S$. Since $d(x,\cdot)$ is $1$-Lipschitz, we have $d(x,\cdot)\in W^{1,2}_\loc(S,d,\mu)\cap C(S)$ with $|Dd(x,\cdot)|_d\leq 1$ $\mu$-a.e. and hence,
\[
d_{\Ch}(x,y) \geq d(x,y) - d(x,x) = d(x,y).
\]
So $d_{\Ch}\geq d$.
For the converse inequality, let $f\in W^{1,2}_\loc(S,d,\mu)\cap C(S)$ be such that $|Df| \leq 1$ $\mu$-a.e. on $S$.
Take any geodesic $\gamma:[0,1]\to S$ that joins $x$ to $y$.
Let $\{U_i\}_{0\leq i\leq k}$ be a finite cover of $\gamma$ by successively overlapping open sets
such that each $U_i\subset P_i$ where $P_i$ is chosen as in \thref{lem:neighborEmbed}.
Also, let $(t_i)_{0\leq i\leq k}$ be an increasing sequence in $[0,1]$ such that $t_0 = 0, t_k = 1$
and $\gamma(t_i)\in U_{i-1}\cap U_{i}$ for $0<i<k$. We claim that $f\in \LIP(U_i,d)$. First, let us fix an $i\in\{0,1,\dots, k\}$. As in (i), we consider a smooth cut-off function $\varphi_i= 1$ on a neighborhood of $U_i$ with
$d(\supp\varphi_i, \partial P_i)>0$.
Then $\varphi_i f\in W^{1,2}(Y_i, e_i, \mu_{e_i})$, where $(Y_i,e_i)$ is an embedding space of $P_i$ as in \thref{lem:neighborEmbed}. For any $0<\varepsilon<1$, considering $(e_{ij})_{j\geq 1}$ to be a Lipschitz approximation of $e_i$
by polyhedral distances in $Y_i$, using locality of the weak minimal upper gradient we get that
\[
|Df|_{e_{ij}} \leq \frac{1}{1-\varepsilon}|Df|_{e_i} \leq \frac{1}{1-\varepsilon}
\]
holds $\mu$-a.e. on $U_i$ and $j$ large enough.
But, by \cite[Proposition 4.1]{eellsHarmonicMapsRiemannian2001}, $f$ will be $1/(1-\varepsilon)$-Lipschitz for $e_{ij}$ on $U_i$.
So, in turn, $f$ is $(1+\varepsilon)/(1-\varepsilon)$-Lipschitz for $d$ on $U_i$.
As $\varepsilon$ is arbitrary, by letting it tend to $0^+$, $f$ is $1$-Lipschitz for $d$ on $U_i$.
Thus, we have
\begin{align*}
f(x) - f(y) &= \sum_{i=1}^k f(\gamma(t_i)) - f(\gamma(t_{i-1})) \\
&\leq \sum_{i=1}^k d(\gamma(t_i), \gamma(t_{i-1})).
\end{align*}
So, when $k\to\infty$, as $(S,d)$ is a length space, this procedure leads us to
\[
f(x) - f(y) \leq d(x,y),
\]
and therefore $d_{\Ch}\leq d$.\\
(iii) Let $x\in S$. Again pick a neighborhood $P$ of $x$ as in \thref{lem:neighborEmbed} and consider a $\rho>0$ such that $B(x,2\rho)\subset P$ and $2\rho<d(x,\partial P)$. We set $U:=B(x,\rho)$ and let $(Y,e)$ be an embedding space of $P$ with Lipschitz approximation $(e_i)_{i\geq 1}$. Now given $0<\varepsilon<1$ arbitrary we pick an index $i$ large enough such that 
\[
(1-\varepsilon)e \leq e_i \leq (1+\varepsilon)e
\]
on $Y$. Remark \ref{rypo} gives us constants $\tilde{C}>0$ and $\tilde{k}>1$ such that if $v\in W^{1,2}(Y,e_i,\mu_{e_i})$ and if $B_{e_i}(z,kr)\subset B_e(x, 2\rho)$ then one has
\[
\int_{B_{e_i}(z, r)}|v-v_{B_{e_i}}|^2\dd\mu_{e_i} \leq \tilde{C}r^2 \int_{B_{e_i}(z, \tilde{k} r)} |Dv|_{e_i}^2\dd\mu_{e_i}
\]
with $v_{B_{e_i}} := \fint_{B_{e_i}(z,r)}v\dd\mu_{e_i}$. In order to fit everything in $B(x,2\rho)$, we restrict the radius $r$ to the interval
\[
r \in \Big(0, \frac{\rho(1-\varepsilon)}{\tilde{k}(1+\varepsilon)} \Big).
\]
Now we take any $f\in W^{1,2}(S, d, \mu)$, any $z\in U$ and $r$ as above. Again, we consider a smooth cut-off function $\varphi=1$ on a neighborhood of $U$ with $d(\supp(\varphi), \partial P)>0$.
Then, $v=\varphi f\in W^{1,2}(Y, e, \mu_{e})$. Using the locality of the weak minimal upper gradient, we now estimate with an obvious notation as follows:
\begin{align*}
&\phantom{=}~\int_{B(z,r)} |f-f_{B}|^2\dd\mu\\
&=\int_{B_e(z,r)} |f-f_{B_e}|^2\dd\mu_e\\
&\leq 4 \int_{B_e(z,r)} |f-f_{B_{e_i}}|^2\dd\mu_{e} \\
&\leq 4 \frac{1}{(1-\varepsilon)^2} \int_{B_e(z,r)} |f-f_{B_{e_i}}|^2\dd\mu_{e_i}\\ &\leq 4 \frac{1}{(1-\varepsilon)^2} \int_{B_{e_i}(z,(1+\varepsilon) r)} |f-f_{B_{e_i}}|^2\dd\mu_{e_i} \\
&\leq 4 \tilde{C} \frac{(1+\varepsilon)^2}{(1-\varepsilon)^2} r^2 \int_{B_{e_i}(z, \tilde{k}(1+\varepsilon) r)} |Df|_{e_i}^2\dd\mu_{e_i} \\
&\leq 4 \tilde{C} \frac{(1+\varepsilon)^4}{(1-\varepsilon)^2} r^2 \int_{B_e(z, \tilde{k}(1+\varepsilon)(1-\varepsilon)^{-1} r)} |Df|^2_e\dd\mu_e\\
&= 4 \tilde{C} \frac{(1+\varepsilon)^4}{(1-\varepsilon)^2} r^2 \int_{B(z, \tilde{k}(1+\varepsilon)(1-\varepsilon)^{-1} r)} |Df|^2\dd\mu
\end{align*}
completing the proof, if we set 
$$
U:=B\Big(x, \frac{\rho(1-\varepsilon)}{\tilde{k}(1+\varepsilon)}\Big),\quad k:=\tilde{k}(1+\varepsilon)(1-\varepsilon)^{-1}. 
$$

(iv) Similarly, we place ourselves in the same setting as (iii) and since
we want to stay in the ball $B(x,2\rho)$, we restrict $r$ to 
\[
r\in \Big(0, \frac{\rho}{2(1+\varepsilon)}\Big).
\]
Then, using once more Remark \ref{rypo} for any $z\in B(x,\rho)$ and any $r$ as above one has
\begin{align*}
&\mu(B(z,2r))=\mu_e(B_e(z,2r))\leq \frac{1}{(1-\varepsilon)^2} \mu_{e_i}(B_e(z,2r))\\
&\leq \frac{1}{(1-\varepsilon)^2} \mu_{e_i}(B_{e_i}(z,2(1+\varepsilon)r))\leq 4\widetilde c \frac{(1+\varepsilon)^2}{(1-\varepsilon)^4} \mu_{e_i}(B_{e_i}(z,(1-\varepsilon)r))\\
&\leq 4\widetilde c \frac{(1+\varepsilon)^2}{(1-\varepsilon)^4} \mu_{e_i}(B_{e}(z,r))\leq 4\widetilde c \left(\frac{1+\varepsilon}{1-\varepsilon}\right)^4 \mu_e(B_e(z,r))\\
&=4\widetilde c \left(\frac{1+\varepsilon}{1-\varepsilon}\right)^4 \mu(B(z,r)),
\end{align*}
which setting 
$$
U:=B\Big(x, \frac{\rho}{2(1+\varepsilon)}\Big),
$$
completes the proof.
\end{proof}

Using that $(S,d)$ is a complete locally compact length space, standard arguments entail that the doubling estimate holds on arbitrary relatively compact open subsets and that the weak Poincaré estimate also holds on arbitrary relatively compact open subsets. A combination of these two facts implies that the weak Poincaré estimate self-improves to a strong Poincaré estimate by \cite{sturmAnalysisLocalDirichlet1996}. We summarize these results in :

\begin{corollary}\emph{(i)} For every open relatively compact $U\subset S$ there exists a constant $C>0$, depending on $U$, such that for any $f\in W^{1,2}(S,d,\mu)$ and any $z\in U$ with $B(z,r)\subset U$ one has 
\[
\int_{B(z, r)}|f-f_B|^2\dd\mu \leq Cr^2 \int_{B(z,r)} |Df|^2\dd\mu,\quad \text{where $f_B := \fint_{B(z,r)}f\dd\mu$.}
\]
\emph{(ii)} For every open relatively compact $U\subset S$ there exists a constant $c>0$, depending on $U$, such that for any $z\in U$ and every $r>0$ with $B(z,2r)\subset U$ one has
\[
\mu(B(z, 2r)) \leq c \mu(B(z,r)).
\]
\end{corollary}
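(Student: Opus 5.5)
The plan is to pass from the local statements of Theorem \ref{thm:AnalysisOnBIC} (iii) and (iv) to statements on an arbitrary relatively compact open $U$. The tools are compactness of $\overline U$, local compactness, the length structure of $(S,d)$, and finally Sturm's self-improvement result in \cite{sturmAnalysisLocalDirichlet1996}. I would prove (ii) first, because the doubling property on $U$ is needed as an input for upgrading the Poincaré inequality.

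\textbf{Doubling.} Cover the compact set $\overline U$ by finitely many of the neighborhoods $U_{x_1},\dots,U_{x_N}$ from Theorem \ref{thm:AnalysisOnBIC} (iv). Fix a Lebesgue number $\lambda>0$ such that every ball $B(z,\lambda)$ with $z\in\overline U$ lies in some $U_{x_j}$; one can shrink the $U_{x_j}$ to balls so that the local doubling applies to all radii $2r\le\lambda$.

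\begin{itemize}
\item For $2r\le\lambda$, the estimate $\mu(B(z,2r))\le c_j\,\mu(B(z,r))$ holds with $c:=\max_j c_j$.
\item For $\lambda<2r$ with $B(z,2r)\subset U$, use the bound $\mu(B(z,2r))\le\mu(U)<\infty$ (local finiteness of $\mu$, $U$ relatively compact).
\item In the same range, bound $\mu(B(z,r))\ge\mu(B(z,\lambda/2))$ from below uniformly in $z\in\overline U$. This follows because $z\mapsto\mu(B(z,\lambda/4))$ is positive (the measure has full support, since $\mu=\ee^{2u}\mu_h$ by Theorem \ref{thm:ReschetnyakHuber}) and lower semicontinuous on the compact $\overline U$, and $B(z',\lambda/4)\subset B(z,\lambda/2)$ for $z'$ close to $z$.
\end{itemize}

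The ratio of the two bounds gives a uniform constant for the large radii.

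\textbf{Weak Poincaré on $U$.} Again cover $\overline U$ by finitely many neighborhoods from (iii), with constants $C_j,k_j$, and set $k:=\max k_j$. Small balls $B(z,kr)$ with $kr$ below a Lebesgue number fall inside one chart and are handled by the local estimate. For balls of size comparable to $\operatorname{diam}U$, I would use the standard chaining argument in length spaces: connect sub-balls along geodesics by chains of overlapping small balls. Doubling, already proved, controls the number of balls and the comparison of mean values, which yields a weak Poincaré inequality on $U$ with some dilation constant $k$.

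\textbf{Strong Poincaré.} With doubling and the weak $(2,2)$ Poincaré inequality on relatively compact sets, together with the facts that $\Ch$ is a strongly local regular Dirichlet form (Theorem \ref{regula} and part (i)) whose intrinsic distance is $d$ (part (ii)), Sturm's results in \cite{sturmAnalysisLocalDirichlet1996} let one remove the dilation, $k=1$. The remaining point is to apply that framework on a slightly larger relatively compact $U'\supset\overline U$, so that the balls used in the argument stay in the domain. I expect the main obstacle to be the large-ball chaining for the weak Poincaré inequality near $\partial U$, in particular guaranteeing that chains of balls remain inside $U$. The first thing I would try is replacing $U$ by a bigger relatively compact set in the intermediate steps and absorbing the resulting constants.
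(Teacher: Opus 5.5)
Your proposal is correct and takes the same route as the paper, which only sketches it. Doubling and the weak Poincaré inequality are extended from the local neighborhoods of parts (iii) and (iv) of the preceding theorem to arbitrary relatively compact $U$ by compactness and chaining in the complete, locally compact length space $(S,d)$, and the dilation is then removed by Sturm's self-improvement result, using that $\Ch$ is strongly local and regular with $d_{\Ch}=d$. The boundary obstacle you anticipate does not actually arise: geodesics from $z$ to points of $B(z,r)$ stay in $B(z,r)$, so if the chain balls have radius $\delta$ with $k_0\delta\le r$ ($k_0$ the local dilation constant), every enlarged chain ball lies in $B(z,2r)\subset B(z,kr)\subset U$ as soon as $k\geq 2$.
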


We recall that, as the Cheeger energy is a strongly local regular Dirichlet form and we get the associated Cheeger Laplacian $\Delta$ as a nonpositive self-adjoint operator in $L^2(S,\mu)$. The heat semigroup $\ee^{t\Delta}$, $t>0$, is then defined a priori in terms of the spectral calculus. Being equipped with \thref{thm:AnalysisOnBIC} and Sturm's results on analysis on strongly local regular Dirichlet spaces, we immediately obtain: 

\begin{corollary}
\th\label{cor:ExistenceHeat}
There exists a uniquely determined Hölder continuous function 
$$
p: (0,\infty) \times S \times S \longrightarrow  (0,\infty)
$$
the \emph{heat kernel of $(S,d)$}, such that for any function $f\in L^2(S,d,\mu)$ and $\mu$-a.e. $x\in S$,
\[
\ee^{t\Delta}f(x) = \int_S p(t,x,y)f(y)\dd\mu(y).
\]
Moreover, for all $x,y\in S$, $s,t>0$ one has
\[
p(t,x,y) = p(t,y,x),\quad p(s+t,x,y) = \int_S p(t,x,z)p(s,z,y)\dd\mu(z).
\]
\end{corollary}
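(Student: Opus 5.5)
The plan is to feed the local results of Theorem \ref{thm:AnalysisOnBIC} and the preceding corollary into Sturm's theory of strongly local regular Dirichlet spaces \cites{sturmAnalysisLocal1995, sturmAnalysisLocalDirichlet1996}. First we record the structural input. By Theorem \ref{thm:AnalysisOnBIC}(i), $(S,d,\mu)$ is infinitesimally Hilbertian. Since $(S,d)$ is a complete, locally compact length space, Hopf–Rinow makes it proper. Theorem \ref{regula} then shows that $(\Ch, W^{1,2}(S,d,\mu))$ is a strongly local regular Dirichlet form on $L^2(S,\mu)$. Also, $\mu$ has full support and is finite on balls.

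Next we verify Sturm's axioms. By Theorem \ref{thm:AnalysisOnBIC}(ii), the intrinsic distance is $d_{\Ch}=d$. Hence $d_{\Ch}$ is a genuine distance inducing the original topology, and its balls are relatively compact because $(S,d)$ is proper. The preceding corollary gives volume doubling and the (strong) scale-invariant $L^2$-Poincaré inequality on every relatively compact open $U\subset S$, for balls $B(z,2r)\subset U$. These are exactly the local hypotheses of Sturm's third paper, formulated with respect to $d_{\Ch}$: on each relatively compact $U$ one has doubling and Poincaré at all scales below some $r_U>0$.

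Then we invoke Sturm's conclusions. Doubling plus Poincaré yields a local parabolic Harnack inequality via Moser iteration. From this one gets, on each $U$, a locally Hölder continuous version $p$ of the integral kernel of $\ee^{t\Delta}$, jointly in $(t,x,y)$, together with local Gaussian upper and lower bounds. The symmetry $p(t,x,y)=p(t,y,x)$ follows from the self-adjointness of $\Delta$. The Chapman–Kolmogorov identity follows from the semigroup law $\ee^{(s+t)\Delta}=\ee^{s\Delta}\ee^{t\Delta}$: both sides are continuous kernels of the same operator, so they agree everywhere.

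Strict positivity comes from the Harnack inequality combined with connectedness of $S$. The near-diagonal lower bound gives $p(t,x,x)>0$, and a chaining argument along a geodesic covered by finitely many relatively compact sets gives $p(t,x,y)>0$ for all $x,y$. Uniqueness holds because two continuous kernels that agree $\mu\otimes\mu$-a.e. agree everywhere, as $\mu$ has full support.

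The main obstacle is the bookkeeping needed to check that the merely local doubling and Poincaré statements, which hold only on relatively compact sets and only for small radii, match Sturm's localized hypotheses. In particular, Sturm requires $d_{\Ch}$-balls and the "strong" form of Poincaré. Theorem \ref{thm:AnalysisOnBIC}(ii) handles the identification of balls, and the corollary handles the strong form, so I would simply cite Sturm's local theorems (Harnack and heat kernel regularity on relatively compact sets) and patch them together via an exhaustion of $S$.
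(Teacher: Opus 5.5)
Your proposal is correct and essentially matches the paper's proof. Like you, the paper combines $d_{\Ch}=d$ (so the intrinsic distance induces the topology) with the local doubling and Poincaré estimates to invoke Sturm's theory for a Hölder continuous heat kernel, and gets strict positivity by appealing to the argument of Kuwae--Machigashira--Shioya, which plays the role of your Harnack/connectedness chaining step; your extra bookkeeping (properness via Hopf--Rinow, symmetry and Chapman--Kolmogorov via continuity, uniqueness via full support of $\mu$) only makes explicit what the paper leaves implicit.
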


\begin{proof}
Using that $d_{\Ch}$ induces the original topology of $S$, and that one has local doubling and local Poincaré estimates, the results of \cites{sturmAnalysisLocal1995, sturmAnalysisLocalDirichlet1996} entail the existence of a nonnegative Hölder continuous heat kernel. The proof of \cite[Theorem 1.5(2d)]{kuwaeSobolevSpacesLaplacian2001} applies to show positivity.
\end{proof}

For the formulation of the following result, note that for every $f\in \LIP_\loc(S,d)$ the exterior derivative $df$ exists $\mu$ a.e. Indeed, pick a smooth Riemannian metric $h$ on $S$ and $u\in\mathcal{V}(S,h)$ such that $d=d_{h,u}$. Consider a relatively compact chart $V\subset S$. As $(S,d)$ does not contain any cusp, one can shrink $V$ so that $e^u\in L^p(V)$ for some $p>2$ by Brézis-Merle's Theorem, see \cite{brezisUniformEstimates1991}. Now, for any absolutely continuous curve $\gamma:[0,1]\to V$ one has
\[
|f(\gamma(1))-f(\gamma(0))| \leq \LIP(f)\,d(\gamma(1),\gamma(0)) \leq \LIP(f)\int_\gamma e^u
\]
by \cite[Lemma 4.72]{fillastreSubharmonic2023}. Thus we get $f\in W^{1,p}(V)$ by \cite{gigliDifferentialStructureMetric2015}. Finally, by Calder\'on's Theorem, $f$ is differentiable Lebesgue a.e. in $V$ and so $df$ is well-defined $\mu$-a.e. in $S$.

\begin{proposition}\th\label{thm:DirichletEqCheeger}
\emph{(i)} Pick a smooth Riemannian metric $h$ on $S$ and $u\in\mathcal{V}(S,h)$ such that $d=d_{h,u}$. Then for any $f\in \LIP_c(S,d)$ one has $|Df|=|df|_{h,u}:=\ee^{-u}|d f|_h$ $\mu$-a.e. in $S$. \\
\emph{(ii)} $C^\infty_c(S)$ is dense in $W^{1,2}(S,d,\mu)$.
\end{proposition}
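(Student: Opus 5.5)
For (i), I will localize and compare with the conformal picture. Since both $|Df|$ and $\ee^{-u}|df|_h$ are local, it suffices to prove the identity $\mu$-a.e. on a small relatively compact chart $V$ as in the discussion preceding the statement, where $\ee^{u}\in L^p(V)$ for some $p>2$ and $f\in W^{1,p}(V)$ is differentiable a.e.

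The inequality $|Df|\le \ee^{-u}|df|_h$ will come from the upper gradient property. For a Lipschitz curve $\gamma$ in $V$, the defining formula of $d_{h,u}$ gives $d(\gamma(s),\gamma(t))\le\int_s^t \ee^{u(\gamma)}|\gamma'|_h$. Conversely, since $f$ is differentiable a.e., $|f(\gamma(1))-f(\gamma(0))|\le\int_\gamma \ee^{-u}|df|_h\,\ee^{u}|\gamma'|_h$ holds along a.e. curve, by a Fuglede-type argument using $W^{1,p}$ with $p>2$ and the polar-set property of $u$ from \cite[Lemma 4.72]{fillastreSubharmonic2023}. Hence $\ee^{-u}|df|_h$ is a weak upper gradient for $d$, and therefore dominates the minimal one.

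For the reverse inequality, my first choice is to test $f$ against one-parameter families of curves. At a point $x$ where $f$ is differentiable, $u$ is finite and approximately continuous, and $x$ is a Lebesgue point of $|Df|$, I take short $h$-segments $\gamma$ in the direction of $\nabla_h f$. Along such a segment, $d(\gamma(0),\gamma(\delta))\approx \ee^{u(x)}\delta|\gamma'|_h$ in an averaged sense over a parallel family of segments. Comparing $\int_\gamma |Df|\,\dd s_d\ge |f(\gamma(\delta))-f(\gamma(0))|\approx |df|_h\delta$ then gives $|Df|\ge \ee^{-u}|df|_h$ a.e. The obstacle is that $d$-arclength along $h$-segments equals $\ee^{u}$ times $h$-arclength only if $u$ behaves well along almost every segment of the family. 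This follows from Fubini together with $\ee^{u}\in L^p$, but one must also pass from the $d$-length to the metric speed for the minimal weak upper gradient. I expect this identification to be the main technical point.

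An alternative that avoids the delicate step is to use the polyhedral approximations $e_i$ from \thref{lem:neighborEmbed}, where the analogue holds by Theorem \ref{letsa} and the appendix. One would then pass to the limit via (\ref{ecpoop}) and the convergence of $u_i\to u$ induced by $\omega_i\rightharpoonup\omega$.

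For (ii), I use Theorem \ref{regula} together with (i). By \thref{thm:AnalysisOnBIC}(i), $\Ch$ is a regular Dirichlet form with core $\LIP_{\mathrm{bs}}=\LIP_c$, since $S$ is proper. It therefore suffices to approximate $f\in\LIP_c(S,d)$ in $W^{1,2}$ by $C_c^\infty$ functions. Using a partition of unity, reduce to $f$ supported in a chart $V$ as above and mollify: $f_\eta:=f*\rho_\eta$, so that $df_\eta\to df$ in $L^p(V)$, and indeed in $L^q$ for every $q<\infty$, because $|df|_h\le \ee^{u}\LIP(f)$ with $\ee^u$ in $L^p$.

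By (i), $\Ch(f_\eta-f)=\int_V |d(f_\eta-f)|_h^2\,\dd\mu_h$, since the weights $\ee^{-2u}$ and $\ee^{2u}$ cancel in two dimensions. Thus only $L^2$ convergence of the Euclidean gradients is needed, which mollification provides. Together with $f_\eta\to f$ in $L^2(\ee^{2u}\mu_h)$, which follows from uniform convergence, this yields the density.
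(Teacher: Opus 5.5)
\textbf{There is a genuine gap in (i), in the half you treat as routine: $|Df|\le \ee^{-u}|df|_h$.}

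For $G:=\ee^{-u}|df|_h$ to be a weak upper gradient in $(S,d,\mu)$, you need $|f(\gamma(1))-f(\gamma(0))|\le\int_\gamma G\,\dd s_d$ for $\mathrm{Mod}_2$-a.e.\ curve of $(S,d)$, integrated against $d$-arclength. What you have is $|f(\gamma(1))-f(\gamma(0))|\le\int_0^1 G\,\ee^{u}|\gamma'|_h\dd t$ together with $d(\gamma(s),\gamma(t))\le\int_s^t \ee^{u}|\gamma'|_h\dd\tau$, i.e.\ $|\dot\gamma|_d\le \ee^{u}|\gamma'|_h$. This bounds $\int_\gamma G\,\dd s_d$ from \emph{above} by your integral, so nothing follows. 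You need the reverse inequality $|\dot\gamma|_d\ge \ee^{u(\gamma)}|\gamma'|_h$, i.e.\ $L_d(\gamma)=\int_\gamma \ee^{u}\dd s_h$, along $\mathrm{Mod}_2$-a.e.\ curve.

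The same inequality is needed to transfer your Fuglede exceptional family, which is null for the $p$-modulus of $(V,d_h,\mu_h)$, to a family null for the $2$-modulus of $(S,d,\mu)$. The density $\rho\,\ee^{-u}$ has the right norm in $L^2(\mu)$, but $\int_\gamma\rho\,\ee^{-u}\dd s_d=\infty$ only follows if $\dd s_d\ge \ee^{u}\dd s_h$. You must also show that $\mathrm{Mod}_2$-a.e.\ $d$-rectifiable curve is $h$-rectifiable at all: a curve spiralling into a vertex of angle $>2\pi$, where $\ee^{u}\to 0$, can have finite $d$-length and infinite $h$-length.

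Such a lower bound on $d$-lengths is a genuinely nontrivial property of subharmonic distances. Since $u$ is only $\delta$-subharmonic, neither upper nor lower semicontinuous, shortcuts next to a curve are not excluded. Fubini plus $\ee^{u}\in L^p$ only give finiteness of $\int_\gamma \ee^{u}\dd s_h$.

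In fact you have placed the difficulty in the wrong half. The inequality $|Df|\ge \ee^{-u}|df|_h$ needs only the trivial bound $\dd s_d\le \ee^{u}\dd s_h$. It gives $\int_{\gamma_y}|Df|\,\dd s_d\le\int|Df|\,\ee^{u}\dd t$. Via $\int_R\rho\,\ee^{u}\dd\mu_h\le\|\rho\|_{L^2(\mu)}\,\mu_h(R)^{1/2}$, it also shows that a.e.\ segment of your parallel family avoids the $\mathrm{Mod}_2$-null family on which $|Df|$ fails to be an upper gradient. Differentiability of $f$ at $x$ and a Lebesgue point of $|Df|\,\ee^{u}$ then finish that half.

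\textbf{How to close it.} The way around the length identification is the alternative you mention in one sentence. It is exactly the paper's proof and should be your main argument: on polyhedral surfaces the identification is free, since Remark \ref{rypo} gives $|Df|_{e_i}=|df|_{g_i}$. To make it a proof you need:
\begin{enumerate}[label=(\alph*)]
\item the localization to $Y\cong\mathbb{S}^2$ from \thref{lem:neighborEmbed}, together with locality of $|Df|$;
\item a description of all polyhedral distances over one fixed background, $e_i=d_{h,u_i+v_i}$ with $h_i=\ee^{2v_i}h$, so that $|df|_{g_i}=\ee^{-(u_i+v_i)}|df|_h$ by the chart computation in the proof of Theorem \ref{letsa};
\item convergence $u_i+v_i\to u$ in $L^q$ (the paper uses Chen's convergence theorem and Rellich--Kondrachov), and then $\mu$-a.e.\ along a subsequence, since weak convergence $\omega_i\rightharpoonup\omega$ alone says nothing pointwise;
\item the squeeze $(1+\varepsilon)^{-1}|Df|\le|Df|_{e_i}\le(1-\varepsilon)^{-1}|Df|$ from (\ref{ecpoop}).
\end{enumerate}

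\textbf{Part (ii).} This is the paper's argument, with a slightly cleaner uniform-convergence treatment of the $L^2$ term. Two small corrections:
\begin{itemize}
\item The claim $df_\eta\to df$ in $L^q$ for \emph{every} $q<\infty$ is false, because $\ee^{u}$ is only in $L^p$ for some $p>2$. Near a positive atom of mass $\alpha$, the $h$-gradient of the distance to it has size $|z|^{-\alpha/2\pi}$. Only $L^2$ is used, so this is harmless.
\item Smooth cut-offs and mollifications need not be $d$-Lipschitz near vertices of angle $>2\pi$, so (i) does not literally apply to $f_\eta-f$. Use $d$-Lipschitz partitions of unity, and note that the polyhedral argument gives (i) equally for $C^\infty_c$ functions once they are known to lie in $W^{1,2}(S,d,\mu)$. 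The paper also passes over this point.
\end{itemize}
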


\begin{proof} (i) As $f$ is compactly supported, in view of the localization argument of \thref{thm:AnalysisOnBIC}, we can assume that $S$ is diffeomorphic to $\mathbb{S}^2$ and that there exists a sequence $(d_i)$ of polyhedral distances on $S$ such that $d_i\to d$ in the Lipschitz sense. We have $d_i=d_{h_i, u_i}$ with $h_i$ a smooth Riemannian metric on $S$ and $u_i\in\mathcal{V}(S,h_i)$. However, as $S$ is diffeomorphic to $\mathbb{S}^2$, each $h_i$ can be written $h_i = \ee^{2v_i}h$ with $v_i$ a smooth function on $S$. In view of $u_i,v_i\in\mathcal{V}(S,h)$, we can apply \cite[Theorem 4.13]{chenUniformConvergenceMetrics2025} and by Rellich-Kondrachov, $u_i+v_i\to u$ in $L^q(S,\mu_h)$ for some $q>1$. After passing to a subsequence, we can assume that $u_i+v_i\to u$ and so 
$$
|df|_{h,u_i+v_i}\to |df|_{h,u}\quad\text{$\mu$-a.e. .}
$$
Since the $(S,d_{h,u_i+v_i})$ are polyhedral surfaces, we can pick a polyhedrally smooth Riemannian metric $g_i$ on $(S,d_{h,u_i+v_i})$ such that $d_{h,u_i+v_i}=d_{g_i}$, and so 
$$
|Df|_{d_{h,u_i+v_i}} = |df|_{g_i}\quad\text{$\mu$-a.e.} 
$$
by Remark \ref{rypo}, where $|df|_{g_i}$ is understood in the Riemannian polyhedral sense. Similarly to the proof of Theorem \ref{letsa} one finds 
$$
|df|_{g_i}=|df|_{h,u_i+v_i}\quad\text{$\mu$-a.e. .}
$$
On the other hand, given $\varepsilon >0$, for all large enough $i$ we get
 \[
 \frac{1}{1+\varepsilon} |Df| \leq |Df|_{d_{h,u_i+v_i}} \leq \frac{1}{1-\varepsilon} |Df|\quad\text{ $\mu$-a.e. .}
 \]
Thus,  
$$
|Df|_{d_{h,u_i+v_i}} \to |Df|\quad\text{$\mu$-a.e.,}
$$
and in summary 
$$
|Df|=|df|_{h,u}\quad\text{$\mu$-a.e.,}
$$
completing the proof.\\
(ii) Pick a smooth Riemannian metric $h$ on $S$ and $u\in\mathcal{V}(S,h)$ such that $d=d_{h,u}$. Let $f\in\LIP_c(S,d)$. We can assume w.l.o.g. that $f$ is compactly contained in a chart $V\subset\mathbb{R}^2$. Let $\phi$ be a Friedrichs mollifier on $\mathbb{R}^2$. For the zeroth order term of the Sobolev norm we have
$$
\int_V |f\ast \phi_{a_n}-f|^2\dd\mu=\int_V |f\ast \phi_{a_n}-f|^2\ee^{2u}\dd\mu_h \xrightarrow[n\to \infty]{} 0
$$
for some sequence $a_n\to 0$ by dominated convergence, because $f\ast \phi_{1/n }\to f$ in $L^1_{\text{loc}}(V,\mu_h)$ (and thus $\mu_h$-a.e. for some subsequence $a_n$ of $1/n$) and because the conformal factor $\ee^{2u}$ is in $L^1_\loc(V,\mu_h)$. For the first-order term, by part (i) we have
\[
\int_V |D(f\ast \phi_\varepsilon - f)|^2\dd\mu
=\int_V |d(f\ast \phi_\varepsilon - f)|_{h,u}^2\dd \mu_{h,u}= \int_V |d(f\ast \phi_\varepsilon - f)|_{h}^2\dd \mu_h \xrightarrow[\varepsilon \to 0]{} 0,
\]
e.g. by \cite[Proposition I.22]{guneysuCovariantSchrodingerSemigroups2017}.
\end{proof}

\begin{remark}Picking a Riemannian metric $h$ on $S$ and $u\in\mathcal{V}(S,h)$ such that $d=d_{h,u}$, and denoting the Cheeger-Laplacian with $\Delta =\Delta_{h,u}$, one has
$$
\Delta_{h,u}f=\ee^{-2u} \Delta_hf\quad\text{for all $f \in C^\infty_c(S)$.}
$$
This follows from 
\begin{align*}
 -\int (\Delta_{h,u} f) f \dd\mu_{h,u} &=\int |Df|^2 \dd\mu=\int |df|^2_{h,u} \dd\mu_{h,u}=\int |df|^2_{h} \dd\mu_{h}= -\int (\Delta_{h} f) f \dd\mu_{h}\\
 &=-\int (\ee^{-2u}\Delta_{h} f) f \dd\mu_{h,u}.
\end{align*}
\end{remark}

\section{BIC surfaces with curvature in Dynkin class}
\label{section:dynkin}

Having established these preliminary results, we now introduce BIC-Dynkin surfaces.\vspace{2mm} 

In this section, let $(S,d)$ be a complete BIC surface without cusps, let $\mu$ be the induced $2$-dimensional Hausdorff measure, and let $\omega$ denote its curvature measure. Furthermore, $\Delta$ denotes the Cheeger Laplacian in $L^2(S,\mu)$ and $p(t,x,y)>0$ the heat kernel.
\vspace{2mm}

We recall that the Choquet capacity with respect to the Cheeger energy is given on $U\subset S$ open by 
$$
\CAP(U):=\inf \{ \>\|f\|_{W^{1,2}}\>\>|\>\>\text{$f\geq 1$ on $U$}\}
$$
and extended to arbitrary Borel sets by outer regularity. A function $\nu$ from the Borel sets of $S$ to $[0,\infty]$ is called \emph{capacity continuous} if $\nu(N) = 0$ for all Borel sets $N \subset S$ with $\CAP(N)=0$.

\begin{definition}
The \textit{Dynkin class} $\Dyn(S,d)$ is defined to be the set of nonnegative capacity continuous Radon measures $\nu$ on $S$ such that 
\begin{align}\label{ewq}
\sup_{x\in S}\int^{t_0}_0\int_S p(s,x,y) \dd\nu(y) \dd s <\infty\quad\text{for some $t_0>0$}.
\end{align}
\end{definition}

In the literature, $\nu$'s with 
$$
\sup_{x\in S}\int^{t_0}_0\int_S p_s(x,y) \dd\nu(y) \dd s <1\quad\text{for some $t_0>0$}
$$
are sometimes called \emph{contractive Dynkin measures}, and $\nu$'s with 
$$
\lim_{t\to 0+}\sup_{x\in S}\int^{t}_0\int_S p_s(x,y) \dd\nu(y) \dd s =0
$$
are called \emph{Kato measures}. Clearly (\ref{ewq}) is equivalent to 
$$
\sup\left\{ \left|\int_S\int^{t_0}_0 \ee^{s\Delta} f(x)\dd s\dd\nu(x)\right| \>\>:\>\> f \in C^\infty_c(S), \int_S |f|\dd\mu \leq 1 \right\} < \infty\>\>\text{for some/all $t_0>0$},
$$
which is, in turn, by \cite{kuwae}, equivalent to
$$
\sup\left\{ \left|\int_S (-\Delta+\lambda_0)^{-1} f(x)\, \dd\nu(x)\right| \>\>:\>\> f \in C^\infty_c(S), \int_S |f|\dd\mu \leq 1 \right\} < \infty\quad\text{for some/all $\lambda_0>0$},
$$
and in its latter form the Dynkin class has been introduced for general regular Dirichlet forms in \cite{stollmannPerturbationDirichletForms1996}.\vspace{1mm}

The Dynkin class encompasses a wide range of measures. Here are some simple ones (see the example below for a geometric example):

\begin{example}1. If $\nu$ is a nonnegative Radon measure such that $\nu\leq c\mu$ for some constant $c>0$, then $\nu$ is capacity continuous and for every $\lambda_0>0$ and $f\in C^\infty_c(S)$ there holds
\[
\left| \int_S (-\Delta+\lambda_0)^{-1}f(x)\dd\nu(x) \right|\leq (c/\lambda_0)\int_S |f(x)|\dd\mu(x),
\]
as $(-\Delta+\lambda_0)^{-1}$ is a Markovian operator, showing that $\nu\in \Dyn(S,d)$.\vspace{1mm}

2. By the last example, $\omega^-\in\Dyn(S,d)$, whenever $(S,d)$ is $\BICB(\kappa)$ for some $\kappa\in\R$.

\end{example}

\begin{definition} $(S,d)$ is called a \emph{BIC-Dynkin} surface if $\omega^-\in\Dyn(S,d)$.
\end{definition}

Note that, by our standing assumptions, BIC-Dynkin surfaces are complete and without cusps.

\begin{theorem}
\th\label{thm:BICDynkinDeformation}
Assume that $(S,d)$ is a BIC-Dynkin surface. Pick a smooth Riemannian metric $h$ on $S$ and $u\in\mathcal{V}(S,h)$ such that $d=d_{h,u}$.
Then there exist a constant $\kappa\in\R$ and a function $\psi\in L^\infty(S,\mu_{h,u}) \cap\mathcal{V}(S,h)$,
such that $(S, d_{h,u-\psi})$ is a $\BICB(\kappa)$ space, in particular, $(S,d)$ is bi-Lipschitz equivalent to a surface in $\BICB(\kappa)$.
\end{theorem}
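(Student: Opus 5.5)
The idea is to absorb $\omega^-$ into the conformal factor by solving a Schrödinger-type equation with a bounded solution. By Theorem~\ref{thm:ReschetnyakHuber}, the curvature of $d_{h,u-\psi}$ is
\[
\omega_{h,u-\psi}=\mathbb{K}_h\mu_h+\underline{\Delta}_h u-\underline{\Delta}_h\psi=\omega-\underline{\Delta}_h\psi,
\]
so it suffices to find a bounded $\psi\in\mathcal{V}(S,h)$ with $\underline{\Delta}_h\psi\le \omega^+ -\omega^- - \kappa\,\ee^{-2\psi}\mu$ in the sense of measures. Dropping $\omega^+$, a sufficient condition is
\[
\underline{\Delta}_h\psi\le -\omega^-+\kappa'\mu
\]
with $\kappa'\le0$ chosen so that $\kappa'\mu\le \kappa\,\ee^{-2\psi}\mu$. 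Boundedness of $\psi$ then gives $\ee^{-2\psi}\in[\ee^{-2\|\psi\|_\infty},\ee^{2\|\psi\|_\infty}]$, so $\kappa:=\kappa'\ee^{2\|\psi\|_\infty}$ works. Bi-Lipschitz equivalence is immediate from $\psi\in L^\infty$, since $d_{h,u-\psi}$ and $d_{h,u}$ differ by factors $\ee^{\pm\|\psi\|_\infty}$. Finally, $d_{h,u-\psi}$ has no points at infinite distance, because it is comparable to $d$.

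To construct $\psi$, we would use the Cheeger Laplacian $\Delta=\Delta_{h,u}$ of $(S,d,\mu)$. By the remark after Proposition~\ref{thm:DirichletEqCheeger}, $\Delta=\ee^{-2u}\Delta_h$ on test functions, so $\mu\,\Delta\psi=\underline{\Delta}_h\psi$ at the level of measures, since $\mu=\ee^{2u}\mu_h$. Fix $\lambda_0>0$ and set
\[
\psi:=(-\Delta+\lambda_0)^{-1}\omega^-,\qquad \psi(x)=\int_0^\infty \ee^{-\lambda_0 s}\int_S p(s,x,y)\dd\omega^-(y)\dd s,
\]
i.e.\ the $\lambda_0$-potential of $\omega^-$. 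Then $-\underline{\Delta}_h\psi+\lambda_0\psi\mu=\omega^-$ distributionally, so $\underline{\Delta}_h\psi=-\omega^-+\lambda_0\psi\mu\le -\omega^-+\lambda_0\|\psi\|_\infty\mu$. This has the wrong sign for a lower bound with $\kappa'\le 0$, so we instead take $-\psi$ as the correction, i.e.\ $u+\psi$. Renaming, the statement's $-\psi$ corresponds to minus this potential. The new curvature is then $\omega+\underline{\Delta}_h\psi=\omega^+-\omega^-+(-\omega^-\ldots)$, and we must check signs carefully. The correct choice is $\underline{\Delta}_h(\text{correction})=+\omega^--\lambda_0\psi\mu$, which yields curvature $\omega^+-\lambda_0\psi\mu\ge -\lambda_0\|\psi\|_\infty\mu$, as desired.

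The key steps, in order, are as follows.
\begin{enumerate}
\item Show that $\psi$ is bounded. We use the Dynkin condition (\ref{ewq}) together with the semigroup property from Corollary~\ref{cor:ExistenceHeat}: splitting $\int_0^\infty=\sum_k\int_{kt_0}^{(k+1)t_0}$ and using Markovianity of $\ee^{t\Delta}$ gives $\sup_x\psi(x)\le C\sum_k \ee^{-\lambda_0 kt_0}<\infty$.
\item Verify that $\psi\ge0$ is in $L^1_\loc$ and that its distributional $h$-Laplacian is the signed Radon measure $\lambda_0\psi\,\ee^{2u}\mu_h-\omega^-$, so that $\psi\in\mathcal{V}(S,h)$.
\item Apply Theorem~\ref{thm:ReschetnyakHuber}(i) to compute the new curvature and measure.
\end{enumerate}

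The main obstacle is step 2: rigorously identifying the potential of a possibly singular measure as a distributional solution. For this we would use capacity continuity of $\omega^-$ and the equivalent resolvent formulation of the Dynkin class from \cite{stollmannPerturbationDirichletForms1996} and \cite{kuwae}. Testing against $\varphi\in C_c^\infty(S)$, we have $\int\psi(-\Delta+\lambda_0)\varphi\dd\mu=\int\varphi\dd\omega^-$ by symmetry of the resolvent and Fubini; this holds first for $\omega^-$ replaced by $L^1$ approximations $f_n\mu\rightharpoonup\omega^-$, with the Dynkin bound giving uniform control. Converting via $\Delta\varphi=\ee^{-2u}\Delta_h\varphi$ and $\dd\mu=\ee^{2u}\dd\mu_h$ gives exactly $\int\psi\,\Delta_h\varphi\dd\mu_h=\int\varphi\,(\lambda_0\psi\dd\mu-\dd\omega^-)$.
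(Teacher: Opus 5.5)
Your proposal is correct and is essentially the paper's argument: the paper also takes $\psi$ to be the resolvent potential of $\omega^-$ (with $\lambda_0=1$), gets $0\le\psi\in L^\infty$ by representing the bounded functional $f\mapsto\int(1-\Delta_{h,u})^{-1}f\dd\omega^-$ on $L^1(S,\mu_{h,u})$ by duality (rather than by your heat-kernel splitting over intervals of length $t_0$), derives $\underline{\Delta}_h\psi=\psi\mu_{h,u}-\omega^-$, and concludes that $d_{h,u-\psi}$ has curvature $\omega^+-\psi\mu_{h,u}$. Your detour about a ``wrong sign'' and $u+\psi$ is spurious: it comes from a sign slip in your first paragraph, where the requirement should read $\kappa'\mu\le-\kappa\,\ee^{-2\psi}\mu$ with $\kappa'\ge 0$ allowed, so the potential subtracted as $u-\psi$ works directly with $\kappa=-\lambda_0\|\psi\|_\infty\ee^{2\|\psi\|_\infty}$ (using $\mu=\ee^{2\psi}\mu_{h,u-\psi}$), which is exactly the choice your final computation uses.
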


\begin{proof} As we have $\omega^-_{h,u}=\omega^-\in\Dyn(S,d)=\Dyn(S,d_{h,u})$, the nonnegative linear functional 
\[
C_c^\infty(S)\ni f\longmapsto \int_S  (1-\Delta_{h,u})^{-1}f\dd \omega_{h,u}^-
\]
extends to a nonnegative bounded linear functional on $L^1(S,\mu_{h,u})$, which is represented by some function $0 \leq \psi \in L^\infty(S,\mu_{h,u})$. For all $\varphi\in C^\infty_c(S)$ we have
\begin{align*}
\int \varphi\dd \omega_{h,u}^-
&= \int (1-\Delta_{h,u})^{-1}(1-\Delta_{h,u})\varphi\dd \omega_{h,u}^- \\
&= \int \psi(1-\Delta_{h,u})\varphi\dd\mu_{h,u},
\end{align*}
and so
\begin{align*}
\int \psi(\Delta_h\varphi)\dd\mu_h
&= \int \psi \ee^{2u}(\Delta_h\varphi)\ee^{-2u}\dd\mu_h \\
&= \int \psi (\Delta_{h,u}\varphi)\dd\mu_{h,u} \\
&= \int \varphi\dd\big(\psi\mu_{h,u} - \omega_{h,u}^-\big)
\end{align*}
leading to $\underline{\Delta_h}\psi = \psi\mu_{h,u} - \omega_{h,u}^-$. Finally, if $\varphi\geq 0$, we have
\begin{align*}
\int \varphi\dd\omega_{h,u-\psi}
&= \int \varphi\,\mathbb{K}_h\dd\mu_h + \int \varphi\dd [\underline{\Delta_h}(u-\psi)] \\
&= \int \varphi\,\mathbb{K}_h\dd\mu_h + \int \varphi\dd [\underline{\Delta_h} u] - \int \varphi\dd[\underline{\Delta_h} \psi] \\
&= \int \varphi\dd\omega_{h,u}^+ - \int \varphi\dd\omega_{h,u}^-
 - \int \varphi\psi\dd\mu_{h,u} + \int \varphi\dd\omega_{h,u}^- \\
&= \int \varphi\dd\omega_{h,u}^+ - \int \varphi\psi\dd\mu_{h,u} \\
&\geq - \int \varphi \psi\dd\mu_{h,u} \\
&\geq - (\sup\psi) \ee^{- 2(\inf\psi)} \int \varphi\dd\mu_{h,u-\psi},
\end{align*}
completing the proof.
\end{proof}

As a consequence, we get the following global results in the BIC-Dynkin case:

\begin{corollary} Assume $(S,d)$ is a BIC-Dynkin surface.\\
\emph{(i)} There exists $c>0$ such that for $0<r<r'$ and $x\in S$,
\[
\mu(B(x,r'))\leq c \ee^{c r'} \left( \frac{r'}{r} \right)^2 \mu(B(x,r)).
\]
\emph{(ii)} There exist $C>0$ such that for all $x\in S$, $r>0$, $f\in W^{1,2}(S,d,\mu)$ one has 
\[
\int_{B(x, r)}|f-f_B|^2\dd\mu \leq C \ee^{Cr} r^2 \int_{B(x,r)} |Df|^2\dd\mu,\quad\text{where $f_B := \fint_{B(x,r)}f\dd\mu$.}
\]
\emph{(iii)} There exist constants $c_i>0$, $i=1,\dots,6$, such that, for all $x,y\in S$ and $t>0$,
\[
\frac{c_1\ee^{-c_2t}}{\mu(B(x,\sqrt{t}))} \ee^{-\frac{d(x,y)^2}{c_3 t}}
\leq p(t,x,y) \leq  \frac{c_4\ee^{c_5t}}{\mu(B(x,\sqrt{t}))} \ee^{-\frac{d(x,y)^2}{c_6 t}}.
\]
\end{corollary}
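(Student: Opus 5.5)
The plan is to transfer everything from the $\BICB(\kappa)$ surface produced by \thref{thm:BICDynkinDeformation} back to $(S,d,\mu)$, using that all three estimates are stable under bi-Lipschitz changes of distance and measure.

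\textbf{Step 1: set up the comparison space.} By \thref{thm:BICDynkinDeformation} we pick $h$, $u$, and $0\le\psi\in L^\infty(S,\mu_{h,u})$ with $(S,d')$, $d':=d_{h,u-\psi}$, a $\BICB(\kappa)$ space. Since $\ee^{-\|\psi\|_\infty}\ee^{u}\le \ee^{u-\psi}\le \ee^{u}$ along curves, we get $\ee^{-\|\psi\|_\infty}d\le d'\le d$. By \thref{thm:ReschetnyakHuber}(i), $\dd\mu'=\ee^{-2\psi}\dd\mu$, so $\ee^{-2\|\psi\|_\infty}\mu\le\mu'\le\mu$.

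\textbf{Step 2: $(S,d')$ is an $\RCD(\kappa,2)$ space.} The distance $d'$ is complete, being bi-Lipschitz to the complete $d$. It has no cusps: a cusp is a point with $\omega'(\{p\})=2\pi$, and since $\omega'\le\omega^+$ as measures (from the computation in the proof of \thref{thm:BICDynkinDeformation}), any cusp of $d'$ would be a point with $\omega(\{p\})\geq 2\pi$. More robustly, one can note that a cusp has zero total angle, which is incompatible with bi-Lipschitz equivalence to a cusp-free surface, whose small circles have length comparable to the radius. Remark \ref{edpopo} then gives that $(S,d',\mu')$ is $\mathrm{CBB}(\kappa,2)$, hence $\RCD(\kappa,2)$, and we may assume $\kappa\le 0$.

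\textbf{Step 3: import the $\RCD$ estimates.} On $\RCD(\kappa,2)$ spaces the Bishop--Gromov comparison yields $\mu'(B'(x,R))\le C\ee^{C R}(R/r)^2\mu'(B'(x,r))$ \cite{sturm2}. The Poincaré inequality with exponential-in-$r$ constant holds on the same ball by \cite{rajalaLocalPoincare2012}. Two-sided Gaussian bounds of the stated shape follow from \cite{jiangHeatKernelBound2016}. Transferring to $(d,\mu)$ works as follows:

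\begin{itemize}
\item For (i), balls nest as $B(x,r)\subset B'(x,r)$ and $B'(x,r)\subset B(x,Lr)$ with $L=\ee^{\|\psi\|_\infty}$. Comparing $B(x,r')\subset B'(x,r')$ with $B'(x,r/L)\subset B(x,r)$ changes only the constants.
\item For (ii), we use (\ref{ecpoop}) for $|Df|$ and the fact that $\int_B|f-f_B|^2\le\int_B|f-c|^2$ for any constant $c$. This gives a Poincaré inequality with an enlarged ball $B(x,Lr)$ on the right. Since we now have uniform doubling up to an $\ee^{Cr}$ factor and a length structure, the Jerison/Sturm self-improvement \cite{sturmAnalysisLocalDirichlet1996} returns the same ball, keeping the $\ee^{Cr}$ dependence.
\item For (iii), the Cheeger forms $\Ch_{d,\mu}$ and $\Ch_{d',\mu'}$ are comparable, with comparable $L^2$ measures. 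Gaussian two-sided bounds are equivalent, in Sturm's framework, to doubling plus Poincaré, with constants growing like $\ee^{ct}$ in the locally uniform version. So the cleanest route is to rederive (iii) for $(S,d,\mu)$ directly from (i), (ii), $d_{\Ch}=d$ (\thref{thm:AnalysisOnBIC}(ii)) and Sturm's theorem \cite{sturmAnalysisLocalDirichlet1996} on parabolic Harnack inequalities under locally uniform doubling and Poincaré. The alternative of comparing the kernels directly is avoided, since heat kernels are not directly comparable under form comparison.
\end{itemize}

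\textbf{Main obstacle.} The delicate point is (iii): one must check that Sturm's results give bounds valid for all $t>0$ with exactly the $\ee^{\pm ct}$ growth, when the doubling and Poincaré constants grow exponentially in $r$. The approach I would try first is to apply the uniform theory at scales $r\le1$, where the constants are uniform, and handle large times by the semigroup property together with the chaining argument of \cite{sturmAnalysisLocalDirichlet1996}. This produces exactly the $\ee^{\pm c t}$ prefactors. Finally, $\mu(B(x,\sqrt t))$ is interchangeable with the primed volume thanks to (i).
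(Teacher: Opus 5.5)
Your proposal is correct and follows the paper's route: deform to a $\BICB(\kappa)$ surface via the deformation theorem, pass to $\RCD(\kappa,2)$ via Remark~\ref{edpopo}, import Bishop--Gromov doubling and Rajala's Poincaré inequality, and transfer back by bi-Lipschitz stability in Sturm's framework; for (iii) this is exactly your rederivation from (i), (ii) and $d_{\Ch}=d$, with the paper additionally citing Jiang--Li--Zhang on the $\RCD$ side. The one step you leave implicit is that Rajala gives a weak $(1,1)$-Poincaré inequality, which the paper upgrades to the $(2,2)$ form via Haj\l{}asz--Koskela before self-improving, while your explicit constants $L=\ee^{\|\psi\|_\infty}$ and the no-cusp check for $d'$ supply details the paper only asserts.
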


\begin{proof} Note first that these statements are all stable under a bi-Lipschitz homeomorphism, see \cite{sturmAnalysisLocalDirichlet1996}. By Theorem \ref{thm:BICDynkinDeformation} the surface $(S,d)$ is bi-Lipschitz equivalent to a complete $\BICB(\kappa)$ surface without cusps, which by Remark \ref{edpopo} is an $\RCD(\kappa, 2)$ space and, as such, satisfies all the statements of the theorem: Doubling is proved in \cite{sturm2} and the Poincaré inequality is proved in its $(1,1)$ form in  \cite{rajalaLocalPoincare2012} and implies the stated (2,2) variant in a weak form by standard arguments \cite{hajlaszSobolevPoincare1995}, and then self-improves to its strong form again by standard arguments \cite{sturmAnalysisLocalDirichlet1996}; the Gaussian bound comes from \cite{jiangHeatKernelBound2016}.
\end{proof}

We close this section with an illustrative example of a BIC-Dynkin surface that is neither $\BICB(\kappa)$ nor $\CBB(\kappa,2)$:

\begin{example}(Volcano of angle $4\pi$) Consider $\R^2$ endowed with the subharmonic metric $g:=\ee^{2\varphi}h$, with $h$ the Euclidean metric and with curvature measure $\omega := -\mathcal{H}^1|_{\mathbb{S}^1}$,
the one-dimensional Hausdorff measure concentrated on the circle. Note that $\varphi$ is given by
\begin{align*}
\varphi(z)
&= -\frac{1}{2\pi} \int_{\R^2} \ln|z-\zeta|\dd\omega(\zeta) \\
&= \frac{1}{2\pi} \int_{\mathbb{S}^1} \ln|z-\zeta|\dd\mathcal{H}^1(\zeta) \\
&= \frac{1}{2\pi} \int_0^{2\pi} \ln|z-\ee^{i\theta}|\dd\theta \\
&=
\begin{cases}
0, & |z|\leq 1, \\
\ln|z|, & |z|\geq 1,
\end{cases} \\
&= \ln(\max(|z|,1)).
\end{align*}
The last step is obtained by a direct application of Jensen's formula. The subharmonic metric is explicitly given by
\[
g =
\begin{cases}
|\dd z|^2, & |z|\leq 1, \\
|z|^2|\dd z|^2, & |z|\geq 1.
\end{cases}
\]
By writing $g$ in polar coordinates,
\[
g = \rho^2 d\rho^2 + \rho^4d\theta^2,
\]
and applying the variable change $r=\frac{1}{2}\rho^2$,
\[
g = dr^2 + 4 r^2d\theta^2,
\]
when $|z|\geq 1$. Geometrically, this shows that $(\R^2,d_{\varphi, h})$ is a disc with a cone of angle $4\pi$ glued along $\mathbb{S}^1$.
This explains the name of \emph{volcano of angle $4\pi$}. By \cite[Theorem 13.3 Chapter 8]{fillastreSubharmonic2023}, $d_{\varphi,h}$ induces the Euclidean topology coming from $d_h$. Note that in view of $\varphi\geq 0$ we get that $d_{\varphi,h}$ is a complete distance.\\
The conformal factor $\ee^{2\varphi(z)}=\max(|z|^2, 1)$ is a strong $A_\infty$-Muckenhoupt weight. Indeed, the conformal factor can be realized as the determinant of the Jacobian of the quasiconformal map $f:\R^2\to\R^2$ defined by
\[
f(z) = \phi(|z|) \frac{z}{|z|}, \quad
\phi(r) =
\begin{cases}
r, & 0\leq r\leq 1, \\
\sqrt{\frac{r^4+1}{2}}, & r\geq 1.
\end{cases}
\]
Recall that, for a plane radial map, the determinant of the Jacobian is
\[
\det J_f(z) = \frac{\phi(|z|)\phi'(|z|)}{|z|}.
\]
If $|z|\leq 1$, $\det J_f(z)=1$.
If $|z|\geq 1$, $\phi'(r)=r^3/\phi(r)$ and so $\det J_f(z) = |z|^2$.
Hence $\det J_f(z) = \max(|z|^2,1)$.
Moreover, it is known (see \cite[Part 2.6]{astalaEllipticPartial2009}) that, for a plane radial map, the quasiconformal dilation coefficient is given by
\[
K_f(x) = \max\left( \frac{|z|\phi'(z)}{\phi(z)}, \frac{\phi(z)}{|z|\phi'(z)} \right).
\]
So, by direct computation, $f$ is $2$-quasiconformal.
We deduce that $(\R^2, d_{\varphi,h})$ is $2$-Ahlfors regular and admits a scale-invariant uniform $(1,1)$-Poincaré inequality on balls. See \cite[Proposition 6.10]{kinnunenRegularitySets2013} and \cite{heinonenQuasiconformalMaps1998} for the strong $A_{\infty}$-weight criterion.
Therefore, by \cite{sturmAnalysisLocal1995}, the heat kernel of $(\R^2, d_{\varphi,h})$ has a global Gaussian upper bound. We can conclude that $\omega^-$ is in $\Dyn(\R^2,d_{\varphi,h} )$ since $\omega^-\in\Dyn(\R^2, d_h)$
by \cite[Corollary 2.30]{erbarTamedSpacesDirichlet2020}. Because $\omega^-$ is not absolutely continuous with respect to $\mu_{\varphi,h}$, it is clear that $(\R^2, d_{\varphi,h})$ is neither $\BICB$ nor $\CBB$. Note that by \cite[Theorem 6.9]{heinonenQuasiconformalMaps1998} the metric measure space $(\R^2, d_{\varphi,h}, \mu_{\varphi,h})$ is even a Loewner space.
\end{example}

\appendix

\section{Riemannian Polyhedra}
\label{section:RiemPolyhedra}

In this appendix, we recall some basic definitions and results of the theory of Riemannian polyhedra, following the book \cite{eellsHarmonicMapsRiemannian2001} by Eells and Fuglede.

\subsection{Locally Lipschitz polyhedra}

A \textit{(countable and locally finite) simplicial complex} $K$ consists of a countable set $V$, called \textit{vertices of $K$},
and a set $S$ of finite non-empty subsets of $V$, called \textit{the simplexes of $K$}, satisfying the following properties:
\begin{itemize}
\item every singleton belongs to $S$,
\item every subset of a simplex is a simplex,
\item every vertex belongs to a finite number of simplexes.
\end{itemize}
A simplex $a\in S$ is said to be a \textit{face} of a simplex $b\in S$ if $a\subset b$.\vspace{1mm}

A \textit{$p$-simplex} or a \emph{$p$-dimensional simplex} is a simplex with exactly $p+1$ vertices. \vspace{1mm}

We denote by $\mathcal{S}^{(p)}(K)$ the set of $p$-simplexes of $K$ and by $\mathcal{V}(K):=V$ and
$$
\mathcal{S}(K):=\bigcup_{p}\mathcal{S}^{(p)}(K)
$$

A simplex is called \textit{maximal} if it is not contained in another simplex and a simplicial complex is called \emph{homogeneous of dimension $m$} if all maximal simplexes have the same dimension $m$. \vspace{1mm}

The space $|K|$ is the set of all formal finite linear combinations of $$
\alpha = \sum_{v\in \mathcal{V}(K)}\alpha(v)v
$$
such that 
$$
\alpha(v)\geq 0, \quad \sum_{v\in \mathcal{V}(K)}\alpha(v)=1,\quad \{ v\in \mathcal{V}(K) | \alpha(v)>0 \}\in \mathcal{S}(K).
$$
Hence $|K|$ is a subset of the linear space $\langle K\rangle $ of formal finite linear combinations 
$$
\alpha = \sum_{v\in \mathcal{V}(K)}\alpha(v)v.
$$

One puts a distance on $|K|$, called \textit{barycentric distance}, by setting
\[
d_{\mathrm{bar}}(\alpha, \beta)^2 := \sum_{v} (\alpha(v) - \beta(v))^2,
\]
where $\alpha=\sum\alpha(v)v$ and $\beta=\sum\beta(v)v$. \vspace{1mm}

By the usual abuse of notation/language, the closure of a simplex $s$ is defined by
$$
\overline{s}:=\Big\{\alpha\in |K|\>:\> \{v|\alpha(v)>0\}\subset s\Big\},
$$
and the interior of a simplex $s$ is defined by 
$$
s^\circ:=\Big\{\alpha\in |K|\>:\> \{v|\alpha(v)>0\}=s\Big\}\subset \overline{s}.
$$
For every $s\in \mathcal{S}^{(p)}(K)$ there exists an affine bijection from $\overline{s}$ onto a subset $\R^p$ given by the convex hull of $p+1$ affinely independent vectors. Any such map is called an \emph{affine coordinate system} for $s$.\vspace{1mm} 

A connected locally compact Hausdorff topological space $X$ is called a \textit{polyhedron} if there exists a complex $K$ and a homeomorphism 
$$
\vartheta : |K|\longrightarrow  X.
$$
Any such pair $T=(K,\vartheta)$ is called a \textit{triangulation} of $X$ and one sets $\mathcal{S}^{(p)}(X,T):=\mathcal{S}^{(p)}(K)$.\vspace{1mm}

A polyhedron $X$ is called \emph{homogeneous of dimension $m$} if some (and then every) triangulation of $X$ has this property. \vspace{1mm}

We add the following simple definition for future reference:

\begin{definition}\label{lbldef} A set together with an equivalence class of locally bi-Lipschitz distance functions will be called an \emph{LBL (local bi-Lip) space}.
\end{definition}

Note that every LBL space carries a canonic Hausdorff topology. A connected locally compact LBL space $X$ is called a \textit{locally Lipschitz polyhedron} if there exists a triangulation $T=(K,\vartheta)$
such that 
$$
\vartheta : |K|\longrightarrow  X
$$
is a locally Lipschitz map. In this case, $T$ is also-called a \textit{locally Lipschitz triangulation of $X$}. For example, any polyhedron becomes a locally Lipschitz polyhedron if $X$ is endowed with the pushforward of the barycentric metric by $\vartheta$.

\subsection{Polyhedral Riemannian metrics}

A \textit{Riemannian polyhedron} is defined as a homogeneous locally Lipschitz polyhedron $X$ of dimension $m$ together with an assignment which to every locally Lipschitz triangulation $T=(K,\vartheta)$ of $X$, every $s\in \mathcal{S}^{(m)}(X,T)$ and every affine coordinate system $\phi$ for $s$ assigns a measurable and pointwise positive definite function 
$$
g^T_s: s^\circ\longrightarrow \mathrm{Mat}(m\times m;\R),
$$
which in some (and then every) affine coordinate system $\phi$ for $s$ satisfies the following ellipticity condition:
\begin{align*}
&\textit{there exists a constant $\Lambda_s>0$ such that for a.e. $v\in \phi(s^\circ)$ and all $\xi\in\mathbb{R}^m$ one has}\\
&\quad\quad\quad\Lambda_s^{-2} \sum_{i=1}^m(\xi^i)^2
\leq \sum_{1 \leq i,j \leq m} g^T_{s,ij}(\phi^{-1}(v))\xi^i\xi^j
\leq \Lambda_s^{2} \sum_{i=1}^m(\xi^i)^2,
\end{align*}
and which is such that if $T=(K,\vartheta)$ and $\underline{T}=(\underline{K},\underline{\vartheta})$ are locally Lipschitz triangulations of $(X,d)$ and $s\in \mathcal{S}^{(m)}(X,T)$ and $\underline{s}\in \mathcal{S}^{(m)}(X,\underline{T})$ are such that $\vartheta(s^\circ) \cap \underline{\vartheta}(\underline{s}^\circ)\neq \emptyset$, then for every affine coordinate system $\phi$ for $s$ and $\underline{\phi}$ for $\underline{s}$, one has the covariant transformation rule
$$
g^T_{s}(\phi^{-1}(v))=\tau'(v)^{\dagger}\,g^{\underline{T}}_{\underline{s}}(\underline{\phi}^{-1}(\tau(v)))\,\tau'(v)\quad\text{for a.e. $v$, where $\tau:=\underline{\phi}\circ\underline{\vartheta}^{-1}\circ \vartheta\circ\phi^{-1}$}.
$$

Then $g$ is called a \emph{polyhedral Riemannian metric} on $(X,d)$.\vspace{1mm}

Note that if for some locally Lipschitz triangulation $T$ of $X$ and every $s\in \mathcal{S}^{(m)}(X,T)$ one is given a measurable Riemannian metric on $s^\circ$ which satisfies ellipticity, then one canonically gets a polyhedral Riemannian metric by requiring the covariant transformation rule to hold.\vspace{3mm}

Let $(X,g)$ be a Riemannian polyhedron of dimension $m$. We fix a locally Lipschitz triangulation $T=(K,\vartheta)$ of $X$.\vspace{3mm}

If $s\in\mathcal{S}^{(m)}(X,T)$, then for any Borel set $E\subset s^\circ$ one defines
\[
 \mu_{g^T_s}(E) := \int_{\phi(E)} \sqrt{\det g^T_s(\phi^{-1}(v))}dv,
 \]
where $\phi$ is any affine coordinate system for $s$ (this is well-defined). Then, for any Borel set $A\subset X$ one defines
\[
\mu_g(A) := \sum_{s\in\mathcal{S}^{(m)}(X,T)} \mu_{g^T_s}(\vartheta^{-1}(A)\cap s^\circ),
\]
This construction does not depend on $T$ and leads to \emph{Riemannian volume measure $\mu_g$ on $(X,d,g)$}, a fully supported Radon measure. Note that the sets $\vartheta(s^\circ)$, $s\in\mathcal{S}^{(m)}(X,T)$, are pairwise disjoint with 
$$
\mu_g\Big(X\setminus \bigcup_{s\in\mathcal{S}^{(m)}(X,T) }\vartheta(s^\circ)\Big)=0.
$$

Let $Z_{g,T}\subset X$ be the union of non-Lebesgue points of $g^T_s$ over $s\in\mathcal{S}^{(m)}(X,T)$, which is a $\mu_g$-null-set. For every couple of points $x,y\in X$ and $\mu_g$-null-set $Z\subset X$ such that $Z\supset Z_{g,T}$, we define $\Omega(x,y;Z)$ to be
the set of locally Lipschitz paths $\gamma: [0,1] \to X$
such that $\gamma(0)=x, \gamma(1)=y$ and $\gamma^{-1}(Z)$ is a $\mu_g$-null-set. For any path $\gamma\in\Omega(x,y;Z)$ its polyhedral $g$-length is defined by
\[
\mathscr{L}_g(\gamma) := \sum_{s\in S^{(m)}(X,T)} \bigintsss_{\gamma^{-1}(\vartheta(s^\circ))}
\sqrt{\sum_{1\leq i,j\leq m}g^T_{s,ij}\big( \vartheta^{-1} \circ \gamma(t)\big) \dot\gamma^i_{\phi_s}(t)\dot\gamma^j_{\phi_s}(t)}\dd t,
\]
where 
$$
(\gamma^1_{\phi_s},\dots,\gamma^m_{\phi_s})=\phi_s\circ \vartheta^{-1} \circ \gamma,
$$
with $\phi_s$ an affine coordinate system with respect to $s$ (well-defined). We write
\[
d^Z_g(x,y) := \inf_{\gamma\in\Omega(x,y;Z)}\mathscr{L}_g(\gamma).
\]
Clearly, $Z\supset Z' \supset Z_{g,T}$ implies $d^Z_g(x,y) \geq d^{Z'}_g(x,y)$.
Finally, we introduce the \textit{Riemannian distance induced by $g$}, denoted with $d_g$, via
\[
d_g(x,y) := \sup\{d^Z_g(x,y) \mid Z\supset Z_{g,T}, Z~\text{ is a $\mu_g$-null-set} \}
\]
By \cite[Proposition 4.1]{eellsHarmonicMapsRiemannian2001}, $d_g$ is a (finite) distance on $X$, which is independent of $T$, and $(X,d_g)$ is a length space. By the above ellipticity assumption, $d_g$ induces the original bi-Lip structure on $X$.\vspace{2mm}

We set
$$
\LIP(X,g):=\LIP(X,d_g),\quad \LIP_{\loc}(X,g):=\LIP_{\loc}(X,d_g).
$$
If $f\in\LIP_{\loc}(X,g)$, then for every $s\in\mathcal{S}^{(m)}(X,T)$ and every affine coordinate system $\phi$ for $s$ the function $f\circ \vartheta\circ \phi^{-1}$ is locally Lipschitz on $\phi(s^\circ)$ in the Euclidean sense. Thus, by Rademacher's theorem, $\partial_ j(f\circ \vartheta\circ \phi^{-1})$ exists and agrees with the distributional derivative $\partial^{\mathrm{distr}}_ j(f\circ \vartheta \circ \phi^{-1})$ a.e.\ in $\phi (s^\circ)$, and we get a $\mu_g$-a.e.\ well-defined Borel function 
$$
|df|_g: X\longrightarrow [0,\infty)
$$
by
$$
|df|_g^2:=\sum^m_{i,j=1}\Big( (g_T^{s,ij}\circ \phi^{-1})\cdot \partial_i(f\circ \vartheta \circ \phi^{-1})\cdot\partial_j(f\circ \vartheta \circ \phi^{-1})\Big)\circ \phi\circ \vartheta^{-1}\quad\text{ in $\vartheta(s^\circ)$}.
$$
We denote by $\LIP_{\loc}^{1,2}(X,g)$ the space of all $f\in \LIP_{\loc}(X,g)$
for which the following Sobolev norm is finite:
\[
\lVert f \rVert_{W_{\text{EF}}^{1,2}(X,g)}^2 = \int_X (f^2 + |df|_g^2)\dd\mu_g. 
\]
The Sobolev space $W_{\text{EF}}^{1,2}(X,g)$ is defined as the completion of $\LIP_{\loc}^{1,2}(X,g)$ by the norm $\lVert\cdot\rVert_{W_{\text{EF}}^{1,2}(X,g)}$.
Here, the letters EF in the subscript stand for Eells-Fuglede to not confuse it with Sobolev spaces of Section \ref{section:SobolevSpaces}. This is a dense subspace of 
$$
L^2(X,g):=L^2(X,\mu_g).
$$

If $f\in W^{1,2}_{\mathrm{EF}}(X,g)$, then for every $ s\in\mathcal{S}^{(m)}(X,T)$ and every affine coordinate system $\phi$ for $s$ the function $f\circ \vartheta\circ \phi^{-1}$ is in the Euclidean Sobolev space $W^{1,2}(\phi(s^\circ))$, so 
$$
\partial^{\mathrm{distr}}_ j(f\circ \vartheta \circ \phi^{-1})\in L^2(\phi(s^\circ)),
$$
and we get a $\mu_g$-a.e. well-defined Borel function 
$$
|df|_g: X\longrightarrow [0,\infty),
$$
by setting
$$
|df|_g^2:=\sum^m_{i,j=1}\Big( (g_T^{s,ij}\circ \phi^{-1})\cdot \partial^{\mathrm{distr}}_i(f\circ \vartheta \circ \phi^{-1})\cdot\partial^{\mathrm{distr}}_j(f\circ \vartheta\circ \phi^{-1})\Big)\circ \phi \circ \vartheta^{-1}\quad\text{in $\vartheta(s^\circ)$}.
$$
One has the following locality rule: If $f_0,f_1\in W^{1,2}_{\mathrm{EF}}(X,g)$ and $f_0=f_1$ $\mu_g$-a.e. on some open $Y\subset X$, then $|df_0|_g=|df_1|_g$ $\mu_g$-a.e. in $Y$, which follows from the corresponding locality result in $W^{1,2}(s^\circ)$.\vspace{2mm}

The \textit{local Sobolev space} $W^{1,2}_{\mathrm{EF},\loc}(X,g)$ is defined as the set of all $f\in L^2_{\loc}(X,g)$ such that $f|_V\in W^{1,2}_{\mathrm{EF}}(V,g)$ for every relatively compact open and connected $V\subset X$. By the locality rule, it is clear $|d u |_g$ is defined for every $W^{1,2}_{\mathrm{EF},\loc}(X,g)$, too.\vspace{1mm}

We remark that none of the above definitions depends on $T$. Also, by Whitehead's Theorem, every smooth Riemannian manifold is (consistently) a Riemannian polyhedron.\vspace{2mm}

From now on we will understand all data with respect to the fixed polyhedral metric $g$.

\begin{theorem}
\th\label{riemPolyhedra:th:analyticProps}
\emph{(i)} $W_{\mathrm{EF}}^{1,2}(X)$ is a Hilbert space.\\
\emph{(ii)} For open relatively compact $U\subset X$ there exists a constant $c>0$, depending on $U$, such that for all $z\in U$ and all $r>0$ with $B(z,2 r)\subset U$,
\[
\mu(B(z,2 r)) \leq c  \mu(B(z,r)).
\]
\emph{(iii)} For every open relatively compact $U\subset X$ there exist constants $C>0$ and $\kappa>0$, depending on $U$, such that for all $z\in U$ all $r>0$ with $B_g(z,\kappa r)\subset U$ and all $f\in W_{\mathrm{EF}}^{1,2}(X)$,
\[
\int_{B(z,r)} |f-f_{B}|^2\dd\mu \leq C r^2 \int_{B(z,\kappa r)} |df|^2\dd\mu,\quad \text{where $f_{B} := \fint_{B(z,r)}f\dd\mu$}.
\]
\emph{(iv)} For all $x,y\in X$ one has
\begin{align*}
d(x,y) &= \sup\{ f(x)-f(y) \mid f\in\LIP(X), |d f |\leq 1 \} \\
&= \sup\{ f(x)-f(y) \mid u\in W^{1,2}_{\mathrm{EF},\mathrm{loc}}(X)\cap C(X), |d f |\leq 1 \}.
\end{align*}
\end{theorem}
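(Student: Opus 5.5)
The plan is to reduce all four statements to results in the Eells--Fuglede monograph \cite{eellsHarmonicMapsRiemannian2001}. Where their hypotheses have to be checked, I would use two facts: the ellipticity constants $\Lambda_s$ make $g$ locally comparable to the flat metric on each simplex, and $d_g$ is locally bi-Lipschitz to the push-forward of the barycentric distance. Throughout I fix a locally Lipschitz triangulation $T=(K,\vartheta)$ and a relatively compact open $U$. Since $K$ is locally finite, $\overline U$ meets only finitely many simplexes, so one uniform constant $\Lambda:=\max_s \Lambda_s$ controls $g$ on $U$.

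For (i), I would show that $\lVert\cdot\rVert_{W^{1,2}_{\mathrm{EF}}}$ satisfies the parallelogram law on $\LIP^{1,2}_{\loc}(X,g)$. This holds because $|df|_g^2$ is, on each $\vartheta(s^\circ)$, a quadratic form in the partial derivatives given by the positive definite matrix $(g_T^{s,ij})$. The completion of an inner product space is a Hilbert space, which gives (i).

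For (ii), on each top simplex $\mu_g$ is comparable, with constant $\Lambda^{m}$, to the Lebesgue measure pulled back by $\phi_s$. Also $d_g$ is comparable, with constants depending on $\Lambda$ and the Lipschitz constants of $\vartheta$ on $\overline U$, to the flat distance of $|K|$ near $U$. The flat polyhedral metric on a finite homogeneous $m$-dimensional complex is locally Ahlfors $m$-regular, with $\mu(B(z,r))\asymp r^m$ for small $r$. Hence $\mu_g$ is doubling on $U$, and comparability of metrics and measures transfers the doubling constant.

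For (iii), I would invoke the local Poincaré inequality of Eells--Fuglede for admissible Riemannian polyhedra, their Chapter 4 (the weak $(2,2)$-Poincaré inequality with dilation constant $\kappa$). The main obstacle is the admissibility, i.e.\ $(m-1)$-chainability, hypothesis. Without it the statement fails: two triangles glued at one vertex admit a locally constant non-constant function. In our application $X$ is a polyhedral surface without boundary, so every point has a neighbourhood that is a disc and is chainable through edges. I would therefore either add this as a standing assumption or verify it as a consequence of $X$ being a topological manifold. Alternatively, I would prove the inequality on the flat model by hand, chaining the Euclidean Poincaré inequality across adjacent simplexes through their common $(m-1)$-faces. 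I would then transfer it to $g$ using that $\int|df|_g^2\,\dd\mu_g$ and the flat Dirichlet integral differ by a factor depending only on $\Lambda$, with balls comparable up to a change of $\kappa$.

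For (iv), the first equality I would take from \cite[Proposition 4.1]{eellsHarmonicMapsRiemannian2001} and the surrounding discussion. The inequality ``$\geq$'' is the integration of $|df|_g\le1$ along admissible paths in $\Omega(x,y;Z)$. Choosing $Z$ to contain the non-Lebesgue points of $|df|_g$ gives $f(y)-f(x)\le \mathscr L_g(\gamma)$, hence $\le d_g(x,y)$. This is where the definition of $d_g$ as a supremum over null sets $Z$ is used. The inequality ``$\leq$'' follows by taking $f=d_g(\cdot,y)$, which has $|df|_g\le 1$ a.e.\ because the local length structure is given by $g$. For the second equality, the extra continuous $W^{1,2}_{\mathrm{EF},\loc}$ functions with $|df|_g\le 1$ can be shown to be locally $1$-Lipschitz for $d_g$. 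I would argue this by mollifying inside the chart $\phi_s(s^\circ)$ and using the Lipschitz statement in Eells--Fuglede, Proposition 4.1. This reduces the second supremum to the first.
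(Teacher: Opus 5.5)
Your proposal is correct, and on (iii) it is more careful than the paper. The paper proves (i), (ii), (iii) and the first equality of (iv) purely by citing \cite[Proposition 5.1, Corollary 4.1, Theorem 5.1, Proposition 4.1]{eellsHarmonicMapsRiemannian2001}, and it argues only the second equality of (iv). There, $|df|\le 1$ plus ellipticity puts $f\circ\vartheta\circ\phi^{-1}$ in $W^{1,\infty}$ of the convex open simplex, so $f\in\LIP_{\loc}(X)$. Integrating along $\gamma\in\Omega(x,y;Z)$ with $Z=Z_{g,T}\cup\{|df|>1\}$ then gives $|f(y)-f(x)|\le\mathscr{L}_g(\gamma)$, so $f$ is $1$-Lipschitz. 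That is exactly your reduction; your mollification in the chart is just a proof of $W^{1,\infty}\subset\LIP$ on convex sets. For the first equality you rightly defer to the citation. Note that, for merely measurable $g$, the claim $|d\,d_g(\cdot,y)|_g\le 1$ a.e.\ is precisely the nontrivial content of \cite[Proposition 4.1]{eellsHarmonicMapsRiemannian2001}. It does not follow from ``the local length structure being given by $g$''.

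You genuinely differ from the paper in (i) and (ii), and both of your routes are fine and more self-contained. In (i), the parallelogram law passes to the completion. Strictly, this shows the abstract completion is Hilbert. Realizing $W^{1,2}_{\mathrm{EF}}$ inside $L^2$ (closability, which (iii) uses) comes from the Euclidean theory on each simplex. In (ii), local Ahlfors regularity of the flat complex plus comparison of measures and distances is an elementary proof, and it makes visible that doubling needs no chainability. The distance comparison comes from ellipticity once the $(m-1)$-skeleton is put into $Z$; the Lipschitz constants of $\vartheta$ alone give only one inequality.

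In (iii) your admissibility objection is correct. The Eells--Fuglede Poincaré inequality is proved for admissible (locally $(m-1)$-chainable) polyhedra, while the appendix only asks for homogeneity. Your two triangles sharing a vertex are a genuine counterexample: a point has zero capacity in dimension $2$, so the indicator of one triangle lies in $W^{1,2}_{\mathrm{EF}}$ with $|df|=0$. The paper only applies the appendix to polyhedral surfaces without boundary, where vertex links are circles and admissibility holds. So nothing downstream breaks, but the hypothesis should be added to the statement. I would not rely on your ``by hand'' alternative for (iii): balls meeting simplices in thin slivers make uniform chaining constants a real issue, so citing Eells--Fuglede under admissibility is the right choice.
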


\begin{proof} \textbf{(i)} See \cite[Proposition 5.1]{eellsHarmonicMapsRiemannian2001}. \\
\textbf{(ii)} See \cite[Corollary 4.1]{eellsHarmonicMapsRiemannian2001}. \\
\textbf{(iii)} See \cite[Theorem 5.1]{eellsHarmonicMapsRiemannian2001}. \\
\textbf{(iv)} The first equality is proved in \cite[Proposition 4.1]{eellsHarmonicMapsRiemannian2001}. Let us demonstrate the second. We define
\[
\rho(x,y) := \sup\{ f(x)-f(y) \mid f\in W^{1,2}_{\mathrm{EF},\loc}(X)\cap C(X), |d f |\leq 1 \}.
\]
In view of 
$$
\LIP(X)\subset W_{\mathrm{EF},\loc}^{1,2}(X)\cap C(X)
$$
we immediately get $d\leq\rho$. For the reverse inequality, we pick $f\in W^{1,2}_{\mathrm{EF},\loc}(X)\cap C(X)$ such that $|d f |\leq 1$. Let $T=(K,\vartheta)$ be a locally Lipschitz triangulation, let $s\in\mathcal{S}^{(m)}(X,T)$ and let $\phi$ be an affine coordinate system for $s$. The map $f\circ \vartheta\circ \phi^{-1}$ is in the Euclidean Sobolev space $W^{1,\infty}(\phi(s^\circ))\subset \LIP(\phi(s^\circ))$, and so $f \in \LIP_\loc(X)$. Being equipped with this regularity, with the notation as above, set
$$
Z:=Z_T\cup \{|d f |>1\}.
$$
For every $\gamma\in\Omega(x,y;Z)$ we have
$$
(d/dt) f(\gamma(t))=\sum^m_{i=1}\partial_i (f\circ \vartheta \circ \phi^{-1}) (\gamma^1(t),\dots,\gamma^m(t)) \dot{\gamma}^i(t),
$$
for a.e. $t\in \gamma^{-1}(\vartheta(s^\circ))$, which using Cauchy-Schwarz and $|d f  |(\gamma(t))\leq 1$ for a.e. $t\in [0,1]$ straightforwardly implies
$$
|f(y)-f(x)|\leq \int^1_0 |(d/dt) f(\gamma(t))| \dd t\leq \mathscr{L}(\gamma),
$$
showing that $f$ is $1$-Lipschitz on $(X,d)$, and so $d\geq\rho$.
\end{proof}

\begin{theorem}
\th\label{riemPolyhedra:th:EqTwoDefSobolev} Assume $d$ is complete and there exists a locally Lipschitz triangulation $T=(K,\vartheta)$ such that for all $s\in\mathcal{S}^{(m)}(X,T)$ there exists an affine coordinate system $\phi$ for $s$ and the map $g^T_s\circ\phi^{-1}$ is smooth (in short: a \emph{polyhedrally smooth Riemannian metric}). Then one has 
$$
W_{\mathrm{EF}}^{1,2}(X) = W_{\mathrm{AGS}}^{1,2}(X, d, \mu),
$$
and for every $f\in W_{\mathrm{AGS}}^{1,2}(X,d,\mu)$, 
\[
|Df| = |df|\quad\text{$\mu$-a.e. on}~X.
\]
\end{theorem}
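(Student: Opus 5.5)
The plan is to prove the two inclusions separately, each time comparing the minimal weak upper gradient $|Df|$ with the polyhedral gradient $|df|_g$. The central step is the pointwise identity $\lip(f)=|df|$ $\mu$-a.e. for $f\in\LIP_\loc(X)$; everything else is relaxation and density.

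\textbf{Step 1 (slope of Lipschitz functions).} Fix a locally Lipschitz $f$ and a top simplex $s$. On $\vartheta(s^\circ)$ the metric $g^T_s\circ\phi^{-1}$ is smooth, so $(\phi(s^\circ),g)$ is a smooth Riemannian manifold. Since $d_g$ is a length distance defined through the same local integrand, $d_g$ agrees with the intrinsic distance of this manifold up to a factor $1+o(1)$ at small scales around interior points. This is the step I expect to be the main obstacle: near an interior point, competing curves may leave $s^\circ$, but for small balls centered at an interior point the comparison $d_g(x,y)=(1+o(1))|\phi(x)-\phi(y)|_{g(x)}$ still follows. The reason is that $g$ is continuous at $x$ on $s^\circ$, curves exiting $s^\circ$ must have length at least the distance to $\partial s$, and ellipticity bounds lengths elsewhere. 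Combining this with Rademacher's theorem gives $\lip(f)(x)=|df|_g(x)$ at every differentiability point $x\in\vartheta(s^\circ)$. Since $\mu_g$ gives no mass to lower-dimensional simplices, the identity holds $\mu$-a.e.

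\textbf{Step 2 ($W_{\mathrm{EF}}\subset W_{\mathrm{AGS}}$ with $|Df|\le|df|$).} Take $f\in\LIP^{1,2}_\loc(X)$. By Step 1, $\Ch(f)\le\int\lip(f)^2\dd\mu=\int|df|^2\dd\mu$, and the same bound holds locally by the minimality of $|Df|$ among relaxed slopes. For general $f\in W_{\mathrm{EF}}^{1,2}$, approximate it by $f_n\in\LIP^{1,2}_\loc$ in the EF norm. Then $\lip(f_n)=|df_n|\to|df|$ in $L^2$, so $|df|$ is an asymptotic relaxed slope of $f$. Hence $f\in W^{1,2}_{\mathrm{AGS}}$ and $|Df|\le|df|$ a.e.

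\textbf{Step 3 (reverse inclusion and inequality).} Take $f\in W^{1,2}_{\mathrm{AGS}}$ and choose $f_n\in\LIP\cap L^2$ with $f_n\to f$ in $L^2$ and $\lip(f_n)\to|Df|$ in $L^2$, which is possible because $(X,d,\mu)$ is locally doubling. By Step 1, $|df_n|=\lip(f_n)$, so $(f_n)$ is bounded in the Hilbert space $W^{1,2}_{\mathrm{EF}}$ of Theorem \ref{riemPolyhedra:th:analyticProps}(i). A weakly convergent subsequence has limit $f$. The lower semicontinuity of the convex local functional $\int_A|d\cdot|^2\dd\mu$ (it is a quadratic form in each chart) gives $\int_A|df|^2\le\int_A|Df|^2$ for every Borel $A$, hence $|df|\le|Df|$ a.e. Combining this with Step 2 yields equality of the spaces and $|Df|=|df|$ $\mu$-a.e. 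Completeness of $d$ is used only to work inside the standard AGS framework, where $\mu$ is finite on balls and balls are relatively compact.
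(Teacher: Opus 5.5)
Your proposal is correct and follows essentially the paper's route: the a.e.\ identity $\lip(f)=|df|$ for locally Lipschitz $f$ on the open top-dimensional simplices, then relaxation in both directions using that $W^{1,2}_{\mathrm{EF}}$ is a Hilbert space. The one difference is that you localize the lower semicontinuity to Borel sets $A$. This gives the pointwise identity $|Df|=|df|$ explicitly, whereas the paper only compares the global energies $\mathcal{E}$ and $\Ch$ and leaves the pointwise statement implicit.

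One small correction: a Lipschitz sequence with $\lip(f_n)\to|Df|$ strongly in $L^2$ exists in any metric measure space, so local doubling plays no role in that step. To see this, take an optimal sequence from the definition of $\Ch$, pass to a weak limit, and use the a.e.\ minimality of $|Df|$ together with convergence of norms.
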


\begin{proof} As $g^T_s\circ\phi^{-1}$ is smooth for every $s$, one has
\begin{equation}
\label{riemPolyhedra:eq:localLipEqGrad}
\lip(f)(x) = |df(x)| \quad\text{for $\mu$-a.e.}~x\in X.
\end{equation}
We define the functional
\[
\begin{aligned}
\mathcal{E} : L^2(X) &\longrightarrow   [0,\infty], \\
f &\longmapsto
\begin{cases}
\int |df|^2\dd\mu, & f\in W_{\text{EF}}^{1,2}(X), \\
\infty, & f\notin W_{\text{EF}}^{1,2}(X).
\end{cases}
\end{aligned}
\]
Since $W_{\text{EF}}^{1,2}(X)$ is a Hilbert Space, it is clear that $\mathcal{E}$ is lower semicontinuous. Let $f\in W_{\text{EF}}^{1,2}(X)$. By definition, there exists a sequence $(f_k)$ in $\LIP^{1,2}(X)$ that converges to $f$ in the $W^{1,2}_{\mathrm{EF}}$-norm. Thus we have
\[
\mathcal{E}(f) = \lim\mathcal{E}(f_k) = \lim\int\lip(f_k)^2\dd\mu \geq \Ch(f),
\]
by the definition of $|Df|$, and
\[
\mathcal{E}(f) \leq \liminf\mathcal{E}(f_k) = \liminf\int\lip(f_k)^2\dd\mu,
\]
and so $\mathcal{E}(f)\leq\Ch(f)$. We conclude that $W_{\mathrm{EF}}^{1,2}(X,d,\mu)\subset W_{\mathrm{AGS}}^{1,2}(X,d,\mu)$ and 
\[
\mathcal{E}(f) = \Ch(f) \quad\text{for all}~f\in W_{\text{EF}}^{1,2}(X).
\]
For the reverse inclusion, let $f\in W_{\mathrm{AGS}}^{1,2}(X,d,\mu)$.
As $\Ch(f)<\infty$, by definition, there exists a sequence $(f_k)$ in $\LIP(X)\cap L^2(X)$ such that $f_k\to f$ in $L^2(X)$, and so
\[
\mathcal{E}(f) \leq \liminf_{k\to\infty}\mathcal{E}(f_k) = \liminf_{k\to\infty}\int\lip(f_k)^2\dd\mu.
\]
By taking the infimum over all such sequences, we obtain $\mathcal{E}(f)\leq\Ch(f)$.
\end{proof}

Note that in the above situation, as the set of non-Lebesgue points of the polyhedral Riemannian metric is empty, one gets the simple formula
$$
d(x,y)=\inf \mathscr{L}(\gamma), 
$$
where the infimum is over all locally Lipschitz paths from $x$ to $y$·

\begin{remark}\label{rypo} Under the assumptions of Theorem \ref{riemPolyhedra:th:EqTwoDefSobolev}, the conclusions of \thref{riemPolyhedra:th:analyticProps} remain true if we replace $W_{\text{EF}}^{1,2}(X)$
by $W_{\text{AGS}}^{1,2}(X,d,\mu)$ and $|d f |$ by $|Df|$. 
\end{remark}

One may wonder if it is possible to drop the smoothness assumptions on the Riemannian metric in Remark \ref{rypo}. However, in \cite[Section 4.1]{ryborzInfinitesimalStructure2025}, the author has constructed a measurable Riemannian metric $g$ that satisfies a uniform ellipticity condition and a $C^1$-function $f$ with 
$$
\min(|Df|,\lip(f))>|\nabla f|
$$
on a set of positive measure.

\bibliography{bibliography}

\end{document}